\documentclass[11pt]{article}
\usepackage{amsmath,amsfonts,xypic}
\usepackage{xcolor}
\usepackage{mathtools}
\usepackage{booktabs}
\usepackage{amsthm}
\usepackage{amssymb}
\usepackage[all]{xy}
\usepackage{tikz}
\usepackage{pgfplots}
\usepackage{pstricks,pst-slpe}
\usepackage{pst-solides3d}
\usepackage{comment}
\usepackage{ulem}
\usepackage{enumitem}
\usepackage{rotating}
\usepackage{graphicx}
\usepackage[bottom]{footmisc}
\usepackage[font=small,labelfont=bf]{caption} 
\usepackage{authblk}
\advance \textheight by \topskip
\makeatletter
\@addtoreset{equation}{section}
 
\makeatother

\newtheorem{theorem}{Theorem}[section]

\newtheorem{corollary}[theorem]{Corollary}

\newtheorem{definition}[theorem]{Definition}
\newtheorem{example}[theorem]{Example}

\newtheorem{lemma}[theorem]{Lemma}

\newtheorem{proposition}[theorem]{Proposition}
\newtheorem{remark}[theorem]{Remark}

\def\ker{{\mathrm{Ker}}}
\def\nn{\nonumber}
\newcommand{\beq}{\begin{equation}}
\newcommand{\eeq}{\end{equation}}
\newcommand{\beqa}{\begin{eqnarray}}
\newcommand{\eeqa}{\end{eqnarray}}
\newcommand{\noi}{\noindent}

 \newcommand{\bpm}{\begin{pmatrix}}
 \newcommand{\epm}{\end{pmatrix}}

\def\>{\rangle}
\def\<{\langle}

\usepackage[colorlinks=true,urlcolor=black]{hyperref}
\hypersetup{			
backref = true,			
pagebackref = true,		
hyperindex = true, 		
colorlinks = true, 		
breaklinks = true, 		
urlcolor = blue, 		
linkcolor = blue, 		
bookmarks = true,		
bookmarksopen = true,
citecolor=red,
}
\usepackage{orcidlink}

\newcommand{\Cork}{\operatorname{Cork}}

\newcommand{\Spec}{\operatorname{Spec}}
\newcommand{\mult}{\operatorname{mult}}
\newcommand{\Adj}{\operatorname{Adj}}
\newcommand{\diag}{\operatorname{diag}}

\begin{document}

\title{{\bf On a new class of high-corank Kac--Moody algebras}}
\author[1]{S. Beaudoin${}^{\orcidlink{0009-0003-5612-4870}}$ \thanks{simon.beaudoin@iphc.cnrs.fr}}
\author[1]{Q. Bonnefoy${}^{\orcidlink{0000-0002-1102-5209}}$ \thanks{quentin.bonnefoy@iphc.cnrs.fr}}
\author[2]{A. Marrani${}^{\orcidlink{0000-0002-7597-1050}}$ \thanks{a.marrani@herts.ac.uk}}
\author[1]{M. Rausch de Traubenberg${}^{\orcidlink{0000-0001-5045-2353}}$ \thanks{michel.rausch@iphc.cnrs.fr}}
\author[1]{V. Saulquin${}^{\orcidlink{0009-0002-2175-6109}}$ \thanks{victor.saulquin@iphc.cnrs.fr}}
\affil[1]{Universit\'e de Strasbourg, CNRS, IPHC UMR7178, F-67037 Strasbourg Cedex, France}
\affil[2]{Centre for Mathematics and Theoretical Physics, University of Hertfordshire, UK}
\date{}
\maketitle

\bigskip
\bigskip
\bigskip

\abstract{We present recursive constructions of several families of generalized Cartan matrices associated with Kac--Moody algebras, whose sizes and coranks grow exponentially. The constructions are encoded by connected multigraphs and by block-doubling operations on their associated symmetric generalized Cartan matrices. Equivalently, the corank problem is translated into a spectral graph-theoretic problem: the corank of $2\mathrm{Id}-\operatorname{Adj}(G)$ is the multiplicity of the adjacency eigenvalue $2$. We give two explicit recursive families, compute their spectra and coranks, and emphasize the difference between absolute exponential growth and relative asymptotic density. The resulting algebras are typically indefinite and singular of corank larger than one, and therefore contain several independent central directions and several isotropic radical directions in the root lattice. We also discuss alternative constructions and possible applications to the algebraic structures appearing in gravity, supergravity, string/M-theory and related generalized symmetry problems.}

\setlength\parindent{0pt}

\bigskip

\newpage

\tableofcontents

\newpage

\section{Introduction}\label{sec:intro}

The purpose of this paper is to construct and analyse explicit infinite families of singular generalized Cartan matrices of large corank, together with their associated Kac--Moody algebras. The starting point is elementary: if $G$ is a connected undirected multigraph with no loops, then
\[
        A_G=2\mathrm{Id}-\operatorname{Adj}(G)
\]
is a symmetric indecomposable generalized Cartan matrix. Hence the corank of $A_G$ is the multiplicity of the adjacency eigenvalue $2$. The problem of producing high-corank Cartan matrices can therefore be interpreted as the problem of producing connected multigraphs in which the spectral value $2$ occurs with large multiplicity. This simple observation is one of the guiding principles of the constructions below.

Kac--Moody algebras were introduced as a vast extension of finite-dimensional semisimple Lie algebras, and their construction is governed by generalized Cartan matrices \cite{Kac,Moody,Kac2,moL}. The finite and affine cases are highly structured: finite type is characterized by positive-definite Cartan matrices, while affine type is characterized, in the indecomposable symmetrizable case, by positive semidefinite Cartan matrices of corank one. In affine types, the unique primitive positive null-vector gives the null root, and the corank-one singularity is reflected in the appearance of one central element and one degree derivation in the standard realization. Indefinite Kac--Moody algebras, by contrast, form a far broader and less rigid class. Within this class, singular generalized Cartan matrices of corank larger than one are natural objects, but explicit and controllable families of such matrices are comparatively less familiar.

The matrices constructed in this paper are indefinite and singular, with coranks that grow rapidly with the recursive steps. More precisely, their sizes grow exponentially, and the coranks grow exponentially along infinite subsequences. At the same time, the ratio between corank and size has a subtler asymptotic behavior, typically governed by central-binomial-type estimates. Thus ``high corank'' should be understood as an absolute large-rank phenomenon rather than, necessarily, as a positive-density phenomenon in the large-size limit.

A distinctive feature of the present construction is its compatibility with an explicit spectral recursion. Given a seed matrix $A$, we consider matrices of the form
\[
        A'=
        \begin{pmatrix}
        A&-L \ \mathrm{Id}\\
        -L \ \mathrm{Id}&A
        \end{pmatrix},
        \qquad L\in \mathbb{N}\setminus\{0\}
\]
The spectrum of $A'$ is obtained from the spectrum of $A$ by shifting by $\pm L$. Consequently, the zero-eigenspace of $A'$ is determined by the $\pm L$ eigenspaces of $A$. This is the algebraic mechanism behind the recursive growth of the corank. Geometrically, this block operation corresponds to taking two matched copies of a graph and connecting corresponding vertices by $L$ parallel edges, namely block-doubling. The resulting diagrams are therefore regular multigraphs closely related to hypercubes and modified hypercubes.

The first family starts from a modified cube construction whose third step gives a twelve-dimensional Cartan matrix of corank two. Subsequent steps are obtained by the uniform doubling procedure described above. The second family starts from the one-vertex graph and produces weighted hypercubes. In both cases the spectra can be computed in closed form. The first family shows rich dependence on the linking number $L$, including monotone and oscillatory corank patterns. The second family is more rigid and displays an alternating vanishing/non-vanishing behavior for the relevant values of $L$.

Beyond the explicit calculations, the paper is also meant to isolate several conceptual points. First, for an indecomposable Cartan matrix of corank greater than one, the kernel cannot contain a non-zero vector with all entries of the same sign. Consequently, non-zero integral kernel vectors give isotropic elements of the root lattice which are not roots, because their coordinates in the simple-root basis necessarily have mixed sign. Second, the radical of the symmetrized bilinear form is no longer one-dimensional. This distinguishes the present situation from the affine case and suggests that high-corank Kac--Moody algebras should be treated as genuinely multi-null objects. Third, the constructions sit naturally at the interface between Kac--Moody theory and spectral graph theory, and can therefore be generalized using graph products, lifts, covers, prescribed-kernel block matrices and reflection-geometric Gram matrices \cite{BrouwerHaemers,CvetkovicRowlinsonSimic,GodsilRoyle,BiluLinial,Vinberg67}.

There is also a broader motivation coming from theoretical and mathematical physics. Infinite-dimensional Kac--Moody algebras, especially affine, hyperbolic and Lorentzian ones, have appeared in two-dimensional integrable reductions of gravity, cosmological billiards, hidden symmetries of supergravity, and conjectural formulations of M-theory based on very-extended algebras such as $E_{10}$ and $E_{11}$ \cite{Julia85,DamourHenneauxNicolai2002,DamourHenneauxNicolai2003,WestE11,KleinschmidtNicolai2004,KleinschmidtNicolai2005}. The present high-corank families are not proposed here as immediate replacements for $E_{10}$ or $E_{11}$. Rather, they provide a controlled laboratory in which one can study the algebraic consequences of having many independent null (or zero-norm) and central directions. This may be relevant, at least speculatively, to singular limits of duality algebras, to multi-parameter degenerations, to tensor-hierarchy-like structures, and to situations in which several independent charges, constraints or null directions coexist.\\

The paper is organized as follows. Section~\ref{sec:defs} recalls the definitions of generalized Cartan matrices, the associated Kac--Moody algebras, the graph-theoretic language used throughout the paper, and the determinant identity underlying the spectral recursion. Section~\ref{sec:kernels} discusses the kernel of indecomposable Cartan matrices, emphasizing the distinction between the affine corank-one case and the high-corank case. Section~\ref{sec:constructions} gives the two explicit recursive constructions and derives their characteristic polynomials, spectra, degeneracies and coranks. Section~\ref{sec:further-methods} records additional methods for producing high-corank Cartan matrices beyond the two families studied in detail. Section~\ref{sec:conclusion} concludes with possible mathematical and physical developments. The appendices discuss the explicit corank-two example associated with $A_3$ and the diagram automorphisms of the constructed matrices.

\section{Some definitions}\label{sec:defs}

We begin with several definitions and constructions used throughout this note. We start with Kac--Moody algebras and their associated Cartan matrices, then we introduce the structures of connected graphs and cubes that we will base our construction on, as well as a property on the determinant of certain block matrices.

\subsection{Kac--Moody algebras and Cartan matrices}

Throughout this paper, we restrict ourselves to symmetrizable generalized Cartan matrices.
\begin{definition}\label{def:KM}
Let $A$ be an $r\times r$ matrix with integer entries, $A$ is a generalized Cartan matrix if (i) $A$ is
symmetrizable, {\it i.e.} there exists a diagonal matrix $D$ with positive coefficients such that $DA$ is a symmetric matrix, and (ii) $A$
satisfies the following:
\beqa
&A_{ij}\in \mathbb Z\nn\\
&A_{ii}=2\nn\\
&A_{ij}\leq0\ , \ \  i\ne j\nn\\
&A_{ij}=0 \ \Longleftrightarrow\  A_{ji}=0\nn
\eeqa
\end{definition}
From now on, we refer to generalized Cartan matrices simply as Cartan matrices, whenever no confusion is possible.\\

Let $s=$ Cork$(A)\equiv r$ $-$ rk$(A)$. Consider $3r+s$ linearly independent variables we denote $\{ h_a,e_{i},f_{i}: 1\le a\le r+s,1\le i\le r\}$, and set
\beqa
\mathfrak{h}'=\bigoplus\limits_{i=1}^r\mathbb C h_i\ , \qquad \mathfrak{h}=\bigoplus_{a=1}^{r+s}\mathbb C h_a\ , \qquad {\bf  r_+}=\bigoplus\limits_{i=1}^{r}\mathbb  C e_i \ , \qquad {\bf  r_-}=\bigoplus_{i=1}^{r}\mathbb C f_{i}\nn
\eeqa
Suppose $\alpha_1,\cdots ,\alpha_r\in\mathfrak{h}^*$ are linearly
independent and satisfy
\begin{equation}\label{alpha}
\alpha_j(h_i)=A_{ij}\ ,\quad  1\le i, \, j\le r\
\end{equation}
Then the center $Z=\text{Ker}(\alpha_1) \cap\cdots \cap\text{Ker}(\alpha_r)$ is of
dimension $s$, is contained in $\mathfrak{h}'$ and is independent of the choice of $\alpha_1,\cdots,\alpha_r\in\mathfrak{h}^*$ satisfying \eqref{alpha}. To the above data one can associate the following Lie algebra \cite{Kac,Moody}
(see also \cite{Kac2,moL,Mdo,Ca,Mar}):
\\
\begin{definition} \label{theo:Weyl}
The Kac--Moody algebra $\hat{\mathfrak{g}}(A)$ is the Lie algebra
generated by the $3r+s$ independent
variables $\{ h_a,e_{i},f_{i}: 1\le a\le r+s,1\le i\le r\}$ subject to the Chevalley-Serre relations:
\beqa
\big[h_a,h_b\big]&=&0\nn\\
\big[e_{i},f_{j}\big]&=&\delta_{ij}\; h_i\nn\\
\big[h_a,e_i\big]&=&\alpha_i(h_a)e_i\ \nn\\
\big[h_a,f_i\big]&=&-\alpha_i(h_a)f_i\ \nn\\
{\rm ad}^{1-A_{ij}}(e_i)\cdot e_j&=&0\nn\\
{\rm ad}^{1-A_{ij}}(f_i)\cdot f_j&=&0\nn
\eeqa
\end{definition}

A last useful definition is the following:
\begin{definition}\label{def:indecomposable}
A matrix is said to be indecomposable if it cannot be made block-diagonal upon simultaneous reordering of its rows and columns.
\end{definition}

\subsection{Spectral graph interpretation and $n$-cubes}\label{subsec:spectral-graph}

\begin{definition}
An undirected loopless multigraph $G$ is defined by a collection of vertices and edges. Let
$\{v_1,\cdots,v_n\}$ be the vertices of $G$ and let $\{E_{ij}: 1\leq i,j\leq n\}$ be the
edges of $G$, {\it i.e.,} $E_{ij}=E_{ji}$ is the number of lines connecting the vertices $v_i$ and $v_j$. Since $G$ is loopless, $E_{ii}=0$ for all $i$. The adjacency matrix of $G$ is $({\rm{Adj}}(G))_{ij}= E_{ij}$.
\end{definition}
\noi
\textit{A priori}, we could have $E_{ii}\neq 0$ in the definition above, however we exclude this possibility in order to associate a Cartan matrix to the graph $G$.\\

To any connected graph $G$ one can associate a symmetric indecomposable Cartan matrix: denoting $G$ the connected graph and {\rm{Adj}}$(G)$ its adjacency matrix, we define
\beqa\label{def:AG}
A_G=2 \mathrm{Id}- {\rm{Adj}}(G)
\eeqa
The matrix $A_G$ is a symmetric indecomposable Cartan matrix in the sense of Definition~\ref{def:KM}.
\\

Thus, in the symmetric graph case, constructing high-corank Cartan matrices is equivalent to constructing connected loopless multigraphs whose adjacency matrix has eigenvalue $2$ with high multiplicity. This observation places the constructions below in the standard language of spectral graph theory; see, for example, \cite{BrouwerHaemers,CvetkovicRowlinsonSimic,GodsilRoyle}.\\

If $G$ is $k$-regular, namely, each vertex has degree $k$, then $\rm{Adj}(G)\mathbf 1=k\mathbf 1$, so $A_G\mathbf 1=(2-k)\mathbf 1$, where $\mathbf 1$ has all entries equal to $1$. Hence a connected regular graph gives an affine-type positive null-vector only when $k=2$. For $k>2$, the Perron--Frobenius eigenvalue of $\rm{Adj}(G)$ is $k>2$, and the zero eigenspace of $A_G$ arises, if at all, from non-Perron eigenspaces of $\rm{Adj}(G)$. Consequently, the kernel vectors necessarily have mixed signs, in agreement with the general Kac--Moody argument of Section~\ref{sec:kernels}.\\

\noi
The Cartan matrices considered in this note are associated to modified $n$-cubes and constructed in an inductive way.
\begin{definition}\label{def:cubes}
An $n$-cube $C_n$ is a graph whose vertices are $\{(p_1\ \cdots \ p_n) \in \{0,1\}^n\}$, and such that any two vertices $(p_1 \ \cdots \ p_n)$ and  $(q_1 \ \cdots \ q_n)$ are connected by one edge if
\beqa
\exists ! i \in\{1,\cdots,n\}:
p_i=q_i+1\ {\rm mod}\ 2 \  \nn
\eeqa
\end{definition}
To each vertex  $(p_1 \ \cdots \ p_n)$ of an $n$-cube is associated a label $I=0,\cdots,2^n-1$ defined as follows:
\beqa
I=\sum \limits_{i=1}^n p_i 2^{i-1}  \nn
\eeqa

\begin{definition}\label{def:simi}
Let $C_{n+1}$ be an $(n+1)$-cube with vertices $(p_1 \ \cdots \ p_{n+1})$. The two $n$-cubes of vertices
 $\{ (p_1 \ \cdots \ p_{n} \ 0) \, , \, p_1,\cdots,p_n \in \{0,1\}\}$ and $\{(p_1 \ \cdots \ p_{n} \ 1), p_1,\cdots,p_n \in \{0,1\}\}$
 are said to be parallel faces of $C_{n+1}$. Furthermore, two vertices $(p_1 \ \cdots \ p_n \ 0)$ and  $(p_1 \ \cdots \ p_n \ 1)$ are said to be similar, and two vertices $(p_1 \ \cdots \ p_{n+1})$ and $(1-p_1 \ \cdots \ 1-p_{n+1})$ ${\rm  mod}$ $2$ are said to be complementary.
\end{definition}

\subsection{A property on the determinant of certain matrices}

To end this section, we recall that if $M$ is the block matrix
\beqa
M=\begin{pmatrix}A&\lambda  \mathrm{Id}\\ \lambda  \mathrm{Id}&D\end{pmatrix}\nn
\eeqa
then
\beqa
\label{eq:det}
\det (M) = \det(AD-\lambda^2\mathrm{Id})
\eeqa
Indeed, we have:
\beqa
\begin{pmatrix}A&\lambda \mathrm{Id}\\ \lambda \mathrm{Id}&D\end{pmatrix}\begin{pmatrix}D&0\\-\lambda \mathrm{Id}&\mathrm{Id}\end{pmatrix}=\begin{pmatrix}AD-\lambda^2\mathrm{Id}&\lambda \mathrm{Id}\\0&D\end{pmatrix}\nn
\eeqa
Then, when $D$ is invertible, we easily get the result by taking the determinant on both sides of this expression. When $D$ is not invertible, we can approach it by a sequence $(D_n)$ of invertible matrices. By continuity of the determinant, we get the result for every matrix.\\

For the recursive constructions below, it is useful to isolate the following elementary spectral lemma, which is often more transparent than the determinant computation alone.

\begin{lemma}[Doubling spectral lemma]\label{lem:doubling-spectrum}
Let $A$ be an $N\times N$ complex matrix and let $L\in\mathbb C$. Set
\[
        \mathcal D_L(A)=
        \begin{pmatrix}
        A&-L \ \mathrm{Id}\\
        -L \ \mathrm{Id}&A
        \end{pmatrix}
\]
If $Ax=\lambda x$, then
\[
        \mathcal D_L(A)\binom{x}{x}=(\lambda-L)\binom{x}{x},
        \qquad
        \mathcal D_L(A)\binom{x}{-x}=(\lambda+L)\binom{x}{-x}
\]
Consequently, with algebraic multiplicities,
\begin{equation}\label{eq:spectrum-doubling}
        \Spec\bigl(\mathcal D_L(A)\bigr)
        =\bigl(\Spec(A)-L\bigr)\sqcup\bigl(\Spec(A)+L\bigr)\nn
\end{equation}
Equivalently, if $P_A(t)=\det(t\mathrm{Id}-A)$, then
\[
        P_{\mathcal D_L(A)}(t)=P_A(t-L)P_A(t+L)
\]
\end{lemma}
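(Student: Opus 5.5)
The eigenvector identities come from a direct block multiplication. If $Ax=\lambda x$, then
\[
\mathcal D_L(A)\binom{x}{\pm x}=\binom{Ax\mp Lx}{-Lx\pm Ax},
\]
and both components equal $(\lambda\mp L)$ times the corresponding entry of $\binom{x}{\pm x}$. This gives the first two displayed formulas with no further work.

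For the statement with algebraic multiplicities, eigenvectors are not enough, since $A$ need not be diagonalizable. I would instead conjugate $\mathcal D_L(A)$ by the block matrix
\[
S=\begin{pmatrix}\mathrm{Id}&\mathrm{Id}\\ \mathrm{Id}&-\mathrm{Id}\end{pmatrix},
\qquad S^{-1}=\tfrac12 S .
\]
The matrix $\mathcal D_L(A)=\mathrm{Id}_2\otimes A - L\,\sigma\otimes\mathrm{Id}$, where $\sigma$ is the swap matrix. Both terms are built from blocks that are scalar multiples of $\mathrm{Id}$ or copies of $A$, so they commute with $S$'s block structure in the right way. Computing $S^{-1}\mathcal D_L(A)S$ then yields
\[
S^{-1}\mathcal D_L(A)S=\begin{pmatrix}A-L\,\mathrm{Id}&0\\0&A+L\,\mathrm{Id}\end{pmatrix}.
\]
The columns of $S$ are exactly the vectors $\binom{x}{x}$ and $\binom{x}{-x}$ from the first step, so this is consistent with the eigenvector identities.

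Similar matrices have equal characteristic polynomials, so
\[
P_{\mathcal D_L(A)}(t)=\det\bigl(t\mathrm{Id}-A+L\,\mathrm{Id}\bigr)\det\bigl(t\mathrm{Id}-A-L\,\mathrm{Id}\bigr)=P_A(t+L)\,P_A(t-L).
\]
The spectrum of $A\mp L\,\mathrm{Id}$ is $\Spec(A)\mp L$, with the same algebraic multiplicities. Hence the multiset union in \eqref{eq:spectrum-doubling} follows.

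Alternatively, one could apply \eqref{eq:det} to $t\mathrm{Id}-\mathcal D_L(A)$, taking $\lambda=L$ and $D=t\mathrm{Id}-A$. This gives $\det\bigl((t\mathrm{Id}-A)^2-L^2\mathrm{Id}\bigr)$, which factors as $P_A(t-L)P_A(t+L)$ because the two factors are polynomials in the single matrix $A$ and therefore commute.

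The only point requiring care is the multiplicity statement when $A$ is not diagonalizable. The similarity transformation handles this uniformly, with no further obstacle expected.
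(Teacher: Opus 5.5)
Your proof is correct and is essentially the paper's argument: conjugating by $S$ is the matrix form of the paper's observation that the complementary subspaces $\{(x,x)^t\}$ and $\{(x,-x)^t\}$ are invariant under $\mathcal D_L(A)$, which acts on them as $A-L\,\mathrm{Id}$ and $A+L\,\mathrm{Id}$. Like the paper's, your argument covers non-diagonalizable $A$. Your alternative via the block-determinant identity is also valid; it is the route the paper itself uses for the recursion of $P^{(L)}_n$ in the first family.
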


\begin{proof}
The subspaces $\{(x,x)^t:x\in\mathbb C^N\}$ and $\{(x,-x)^t:x\in\mathbb C^N\}$ are complementary and invariant under $\mathcal D_L(A)$. On them, the operator acts respectively as $A-L\mathrm{Id}$ and $A+L\mathrm{Id}$. This proves the spectral statement and hence the characteristic-polynomial identity.
\end{proof}

\section{Kernel of Cartan matrices}\label{sec:kernels}

In this section, we recall some basic properties of (generalized) Cartan matrices of corank one, and of corank larger than one by contraposition. The latter properties are applicable to the Cartan matrices of high corank which we build in Section~\ref{sec:constructions}.\\

We first prove a proposition established in \cite{mo, moL}:
\begin{proposition}
\label{prop:moody}
Let $A$ be an $n\times n$ indecomposable Cartan matrix.
\begin{enumerate}[noitemsep]
\item Let $v= \begin{pmatrix}v^1&\cdots&v^n\end{pmatrix}^t$
be a non-zero vector in the kernel of $A$ such that $v^1,\cdots, v^n$ are non-negative, {\it i.e.},
$v^1\ge0,\cdots, v^n\ge 0$, then $v^1,\cdots,v^n>0$.
\item Let $v= \begin{pmatrix}v^1&\cdots&v^n\end{pmatrix}^t$
with $v^1,\cdots,v^n\in \mathbb{R}^+\setminus\{0\}$ and let $w=\begin{pmatrix}w^1&\cdots&w^n\end{pmatrix}^t$ with $w^1,\cdots,w^n\in \mathbb{R}^+\setminus\{0\}$ be vectors of the kernel of A. Then  $w = \lambda v$, $\lambda>0$.
\end{enumerate}
\end{proposition}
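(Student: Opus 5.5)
For part 1, I will argue by contradiction, using only the sign pattern of a Cartan matrix and indecomposability. Let $v\neq 0$ with $Av=0$ and all $v^i\ge 0$. Set $I=\{i: v^i>0\}$ and $J=\{j: v^j=0\}$; $I$ is non-empty because $v\neq 0$. Suppose $J\neq\emptyset$. For any $j\in J$, the $j$-th row of $Av=0$ reads
\[
0=(Av)_j = 2v^j+\sum_{i\neq j}A_{ji}v^i=\sum_{i\in I}A_{ji}v^i .
\]
Each term satisfies $A_{ji}\le 0$ (since $i\neq j$) and $v^i>0$, so all terms are $\le 0$ and their sum vanishes. Hence $A_{ji}=0$ for every $j\in J$, $i\in I$. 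By the condition $A_{ij}=0\Leftrightarrow A_{ji}=0$ of Definition~\ref{def:KM}, also $A_{ij}=0$. Reordering the indices so that $I$ precedes $J$ then makes $A$ block-diagonal, which contradicts Definition~\ref{def:indecomposable}. Therefore $J=\emptyset$, and all $v^i>0$.

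For part 2, I will take $v,w$ in the kernel with strictly positive entries and consider the combinations $w-\mu v$. Define
\[
\lambda=\min_i \frac{w^i}{v^i}>0 .
\]
Then $u=w-\lambda v$ lies in $\ker A$, has all entries $\ge 0$, and has at least one vanishing entry, namely at an index realizing the minimum. If $u\neq 0$, part 1 applied to $u$ would force every entry of $u$ to be strictly positive, a contradiction. Hence $u=0$, that is, $w=\lambda v$ with $\lambda>0$.

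There is no serious obstacle. The only delicate point is in part 1: the vanishing of the off-diagonal block must be transferred from $A_{ji}$ to $A_{ij}$ using the symmetric-zero condition, because $A$ is not assumed symmetric. Symmetrizability is not needed beyond this.
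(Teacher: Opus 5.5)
Your proof is correct and follows the paper's argument essentially step for step: the same split into zero and positive index sets with the sign argument for part 1, and the same choice $\lambda=\min_i w^i/v^i$ reducing part 2 to part 1. Your explicit use of $A_{ij}=0\Leftrightarrow A_{ji}=0$ to obtain a genuinely block-diagonal form, and of $v\neq 0$ (where the paper cites $\dim\ker(A)\ge 1$) to ensure the positive index set is non-empty, spells out steps the paper leaves implicit.
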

\begin{proof}
We reproduce the proof of \cite{mo}.

1.  Let $v=\begin{pmatrix}v^1&\cdots&v^n\end{pmatrix}^t$ be a null-vector of $A$ such that $v^i\geq 0$, for all $i$. Since $A_{ii}=2$ and $A_{ij}\leq 0 \text{ for }i\ne j$,
\beqa
 Av=0 \ \  \Longleftrightarrow\ \
\forall i \in \{1,\cdots,n\} \ , \ \sum \limits_{j\ne i} \big|A_{ij}\big| v^j =2 v^i\nn\
\eeqa
Write $\{1,\cdots,n\}=I\cup J$ where $I=\{i: v^i=0\}$ and $J=\{i:v^i>0\}$.
The relation above yields
\beqa
 \ \forall i \in I\ ,\ \sum \limits_{j\in J} \big|A_{ij}\big|\;v^j=0\ \nn
\eeqa
since $\big|A_{ij}\big|v^j=0$, for $j\in I$. Since $v^j>0$ by definition of $J$, $A_{ij}=0$, for all $i \in I, j\in J$.
By hypothesis, $A$ is indecomposable, thus either $I=\emptyset$ or $J=\emptyset$. Since
$\dim (\text{Ker}( A))\ge 1$, $I=\emptyset$ and $v^1,\cdots,v^n>0$. This proves Proposition 1.

2. Let $v=\begin{pmatrix}v^1&\cdots&v^n\end{pmatrix}$, $v^i>0$, and $w=\begin{pmatrix}w^1&\cdots&w^n\end{pmatrix}$, $w^i>0$, be vectors in the kernel of $A$. Defining $\lambda=\min\left(w^i/v^i\right) \in \mathbb{R}^+\setminus\{0\}$, one finds that the vector $w-\lambda v$ is in the kernel, and has all its entries non-negative and at least one zero. By 1., this is not possible unless $w-\lambda v=0$.
\end{proof}

Thus if $A$ is an indecomposable Cartan matrix such that the equation
$Av=0$  is {\it only } satisfied by vectors $v$ with strictly positive components, Cork$(A)=1$. Moody and collaborators further proved that $A$ is the Cartan matrix of an affine Lie algebra \cite{mo}.\\

In the Proposition above, we did not consider the possibility that the kernel of $A$ contains a vector $v$ with positive components and another $w$ with positive and negative components. This case is actually excluded, and the existence of $v$ forbids the existence of $w$.
\\
\begin{corollary}\label{cor:inde}
Let $A$ be an indecomposable Cartan matrix such that Ker$(A)$ contains a vector $v$ with strictly positive entries. Then Cork$(A)=1$.
\end{corollary}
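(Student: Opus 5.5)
We plan to prove that every vector $w\in\mathrm{Ker}(A)$ is a real multiple of $v$. This gives $\dim\mathrm{Ker}(A)=1$, hence $\Cork(A)=1$, since $v\neq 0$ already forces the corank to be at least one. Since $A$ has integer, hence real, entries, the complex kernel is the complexification of the real kernel. It therefore suffices to treat a real vector $w=\begin{pmatrix}w^1&\cdots&w^n\end{pmatrix}^t$ with $Aw=0$.

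The key step is to shift $w$ by a large multiple of $v$ so that it lands in the positive cone, where Proposition~\ref{prop:moody} applies. Because every $v^i>0$, we can choose $\mu>0$ with
\[
\mu>\max_{i}\frac{|w^i|}{v^i}.
\]
Then $u=w+\mu v$ lies in $\mathrm{Ker}(A)$ and has all entries strictly positive, since $u^i\ge \mu v^i-|w^i|>0$.

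We now apply part 2 of Proposition~\ref{prop:moody} to the pair $v,u$. Both are kernel vectors with strictly positive entries, so $u=\lambda v$ for some $\lambda>0$. Hence $w=(\lambda-\mu)v$, which shows $\mathrm{Ker}(A)\cap\mathbb R^n=\mathbb R v$. Passing to $\mathbb C^n$ by splitting a complex kernel vector into its real and imaginary parts, both of which lie in the real kernel, gives $\mathrm{Ker}(A)=\mathbb C v$ and therefore $\Cork(A)=1$.

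There is no real obstacle in this argument. The only point needing care is the reduction from complex to real kernel vectors. It uses the fact that the entries of $A$ are real, so the real and imaginary parts of a complex solution of $Aw=0$ are separately solutions.
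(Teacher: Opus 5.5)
Your proposal is correct and follows essentially the same argument as the paper: shift $w$ by a large positive multiple of $v$ so that it becomes strictly positive, apply part 2 of Proposition~\ref{prop:moody} to get $w+\mu v=\lambda v$, and conclude $w\in\mathbb{R}v$. Your explicit choice of $\mu$ and the remark reducing complex kernel vectors to real ones are small clarifications that the paper leaves implicit.
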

\begin{proof}
Let $v=\begin{pmatrix}v^1&\cdots&v^n\end{pmatrix}^t$, $v^1,\cdots, v^n \in \mathbb{R}^+\setminus\{0\}$ be such that $Av=0$. Consider then $w\in \text{Ker}(A)$.
We can find a positive $C$ such that all of the entries of the vector $w+C v$ are strictly positive. Thus by point 2. of Proposition~\ref{prop:moody} there exists $\lambda\in \mathbb{R}^+\setminus\{0\}$ such that
\beqa
w+Cv =\lambda v \ \ ,\textit{ i.e.}, \ \
w= (\lambda-C) v \ \nn
\eeqa
$w$ and $v$ are thus linearly dependent, therefore Ker$(A)=$Vec$(v)$.
\end{proof}

We have seen that if an indecomposable Cartan matrix $A$ admits one vector with strictly positive entries then Cork$(A)=1$. One may wonder if the converse applies, {\it i.e.},  whether the kernel of a Cartan matrix of corank $1$ only contains vectors of strictly positive or strictly negative entries. It turns out not to be the case, as the following example demonstrates.
\begin{example}
Consider the indecomposable $6\times 6$ Cartan matrix $$
A=\left(\begin{array}{rrrrrr}
2 & -2 & -2 & 0 & -2 & 0
\\
 -2 & 2 & 0 & -2 & 0 & -2
\\
 -2 & 0 & 2 & -2 & 0 & 0
\\
 0 & -2 & -2 & 2 & 0 & 0
\\
 -2 & 0 & 0 & 0 & 2 & -2
\\
 0 & -2 & 0 & 0 & -2 & 2
\end{array}\right)
$$
It is of corank $1$, and the vector
$$ v=\begin{pmatrix}
\phantom{-}0& \phantom{-}0& -1& -1& \phantom{-}1& \phantom{-}1
\end{pmatrix}^t
$$
is in the kernel of $A$, despite having positive, negative and zero entries.
\end{example}

The previous corollary has the following immediate but important consequence. Let $A$ be indecomposable and suppose that $\Cork(A)>1$. Then, no non-zero element of $\ker (A)$ can be entrywise non-negative or entrywise non-positive. Indeed, if $0\neq v\in\ker(A)$ were entrywise non-negative, Proposition~\ref{prop:moody} would imply that all its entries are strictly positive, and Corollary~\ref{cor:inde} would force $\Cork(A)=1$, that is a contradiction. Applying the same argument to $-v$ excludes the entrywise non-positive case. Thus every non-zero vector in $\ker(A)$ has at least one positive and at least one negative entry.

Since $A$ has integer entries, $\ker(A)$ admits a basis over $\mathbb Q$ and hence an integral basis after clearing denominators. Let $D=\diag(d_1,\ldots,d_n)$, with $d_i>0$, be such that $DA$ is symmetric, and let $\alpha_1,\ldots,\alpha_n$ be the simple roots of the Kac--Moody algebra associated with $A$. On the span of the simple roots, we use the standard symmetric bilinear form
\[
        (\alpha_i,\alpha_j)=d_i A_{ij}
\]
For $v=(v^1,\ldots,v^n)^t\in\ker(A)$, define
\[
        \delta_v=\sum_{i=1}^n v^i\alpha_i
\]
Then, for every simple root $\alpha_j$,
\begin{equation}\label{eq:delta-orth-simple}
        (\delta_v,\alpha_j)
        =\sum_{i=1}^n v^i d_i A_{ij}
        =d_j\sum_{i=1}^n A_{ji}v^i=0\nn
\end{equation}
where we used $d_iA_{ij}=d_jA_{ji}$. Hence $\delta_v$ belongs to the radical of the bilinear form on the root lattice. In particular,
\[
        (\delta_v,\delta_v)=0
\]
If $v$ is integral, then $\delta_v$ belongs to the root lattice $Q=\bigoplus_i\mathbb Z\alpha_i$.\\

One must be slightly careful about terminology. A root of a Kac--Moody algebra is either positive or negative with respect to the chosen simple system; hence its coordinates in the basis of simple roots are either all non-negative or all non-positive. Therefore a lattice vector $\delta_v$ with coefficients of mixed sign cannot be a root. The converse is not asserted: having coefficients all of the same sign is a necessary condition for a root, not a sufficient one.

\begin{corollary}\label{cor:isotropic-not-root}
Let $A$ be an indecomposable symmetrizable generalized Cartan matrix with $\Cork(A)>1$. For every non-zero integral vector $v\in\ker(A)$, the vector $\delta_v=\sum_i v^i\alpha_i$ is an isotropic element of the root lattice and is orthogonal to the whole root lattice. Moreover, $\delta_v$ is not a root of the Kac--Moody algebra associated with $A$.
\end{corollary}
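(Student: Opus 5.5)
The argument assembles three facts already established in Section~\ref{sec:kernels}. Fix a non-zero integral vector $v\in\ker(A)$ and set $\delta_v=\sum_i v^i\alpha_i$. Because the $v^i$ are integers, $\delta_v$ lies in $Q=\bigoplus_i\mathbb Z\alpha_i$. Since the simple roots are linearly independent and $v\neq 0$, we also have $\delta_v\neq 0$.

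First, orthogonality. Choose $D=\diag(d_1,\ldots,d_n)$ with $d_i>0$ and $DA$ symmetric, and use the form $(\alpha_i,\alpha_j)=d_iA_{ij}$. The computation preceding the statement gives $(\delta_v,\alpha_j)=d_j\sum_i A_{ji}v^i=0$ for every $j$. Extending by bilinearity, $(\delta_v,\beta)=0$ for all $\beta\in Q$, so $\delta_v$ is orthogonal to the whole root lattice. Taking $\beta=\delta_v$ gives $(\delta_v,\delta_v)=0$, so $\delta_v$ is isotropic.

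Second, the non-root claim, which is the only step that uses $\Cork(A)>1$. Suppose $v$ were entrywise non-negative. Since $v\neq 0$, point 1 of Proposition~\ref{prop:moody} (which needs indecomposability) would make every entry strictly positive. Corollary~\ref{cor:inde} would then force $\Cork(A)=1$, a contradiction. Applying the same reasoning to $-v$ rules out the entrywise non-positive case. Hence $v$ has at least one strictly positive and at least one strictly negative coordinate.

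Finally, we invoke the standard fact that every root of $\hat{\mathfrak g}(A)$ lies in $Q_+\cup(-Q_+)$, where $Q_+=\bigoplus_i\mathbb Z_{\ge0}\alpha_i$. This holds because $\hat{\mathfrak g}(A)$ has the triangular decomposition into $\mathfrak n_-\oplus\mathfrak h\oplus\mathfrak n_+$, with $\mathfrak n_\pm$ generated by the $e_i$ (respectively $f_i$). By linear independence of the $\alpha_i$, the coordinates of $\delta_v$ in the simple-root basis are exactly the $v^i$, which have mixed signs. Therefore $\delta_v\notin Q_+\cup(-Q_+)$, and $\delta_v$ is not a root.

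The only point needing care is this last fact. I would cite it from Kac's book rather than reprove it. The rest is direct bookkeeping from Proposition~\ref{prop:moody} and Corollary~\ref{cor:inde}.
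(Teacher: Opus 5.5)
Your proof is correct and follows essentially the same route as the paper. Both arguments combine the radical computation $(\delta_v,\alpha_j)=d_j(Av)_j=0$, the mixed-sign property of non-zero kernel vectors (obtained from Proposition~\ref{prop:moody} and Corollary~\ref{cor:inde} applied to $v$ and $-v$), and the fact that every root lies in $Q_+\cup(-Q_+)$, which you justify via the triangular decomposition where the paper simply states it.
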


\section{Explicit constructions}\label{sec:constructions}

In this section, we systematically construct Kac--Moody algebras of high corank from the simple Lie algebra $\mathfrak{a}_3$ or $\mathfrak{a}_1$. The general strategy, detailed thereafter, is to construct Cartan matrices by induction, namely the order $(n+1)$ Cartan matrix $A_{n+1}$ from the order $n$ Cartan matrix $A_n$ as follows:
\beqa
A_{n+1}=\begin{pmatrix} A_n&-L \ \mathrm{Id}\\ -L \ \mathrm{Id}&A_n\end{pmatrix}\nn
\eeqa
where $\operatorname{Id}$ is the identity matrix and $L$ a positive integer. As we explain below, this has the geometrical interpretation of gluing together certain $n$-cubes, where $L$ represents the number of links between two similar vertices.\\

In this paper we focus on two families of constructions involving large matrices, and for each, we perform the analysis for $L=1,2$. For the first family, the three first steps of the procedure are special cases, while the next ones are perfectly recursive, allowing to obtain rapidly growing coranks. The second family generalises the construction and is systematic, {\it i.e.} all steps are the same. Different behaviors appear for different values of $L$. All the eigenvalues of the constructed matrices are integers, and a large number of them are negative. Finally, the corresponding Dynkin diagrams are regular, {\it i.e.}, every vertex is connected to the same number of edges.

Throughout the spectral formulae below, binomial coefficients are understood with the convention
\[
        \binom{N}{j}=0\qquad \text{if } j\notin\mathbb Z \text{ or } j<0 \text{ or } j>N
\]
which allows the formulae to be written without repeatedly listing divisibility and parity conditions.

\subsection{First family of constructions}
\label{pyramid}
The first family is obtained by starting with certain $n$-cubes $C_n$ of which we remove some vertices. As said before, the first three steps of the construction are special, and are followed by a recursive procedure.\\

{\bf Step one}\\
Let $C_2$ be the two-cube, {\it i.e.} the square with vertices
\beqa
0=(0 \, 0) \ , \qquad  1=(1 \, 0) \ , \qquad 2=(0 \, 1) \ , \qquad 3=(1 \, 1) \nn
\eeqa
(and edges linking them, as per Definition~\ref{def:cubes}). Consider now the graph obtained from $C_2$ by removing the $0$-th vertex and the edges connected to it. To graphically represent this graph, as well as any other obtained from $n$-cubes by a similar procedure henceforth, we abide by the following conventions: vertices which are kept are represented by dots, removed edges are drawn black while those that are kept are coloured. This yields
\begin{center}
  \begin{tikzpicture}[thick,scale=.6]
  \coordinate (P0) at (0,0);
    \coordinate (P1) at (2,0) ; 
    \coordinate (P2) at (0,2); 
    \coordinate (P3) at (2,2);

 \draw[line width=1pt] (P0) -- (P1);
 \draw[line width=1pt,color=red] (P1) -- (P3);
 \draw[line width=1pt,color=red] (P2) -- (P3);
 \draw[line width=1pt] (P2) -- (P0);

\draw (P1)  node {$\bullet$} ;
\draw (P2)  node{$\bullet$} ;
\draw (P3)  node{$\bullet$} ;

\draw (P0) node[left]{$0$} ;
\draw (P1) node[right]{$1$} ;
    \draw (P2) node[left]{$2$} ;
    \draw (P3) node[ right]{$3$} ;
   \end{tikzpicture}
\end{center}
\noi To the graph above, reordering the labels $(1,3,2)\to (1,2,3)$, we associate the following Cartan matrix and the corresponding Dynkin diagram (see \eqref{def:AG}):
\beqa
\hskip 3.truecm A_1=\left(\begin{array}{rrr}2&-1&0\\-1&2&-1\\0&-1&2\end{array}\right)
\hskip 2.truecm \begin{minipage}{10cm}
\begin{tikzpicture}[scale=.4]

    \tikzstyle{point}=[circle,draw, thick, minimum size=.5cm] ] (6. cm)  ;
      \tikzstyle{ligne}=[thick]
      \tikzstyle{pointille}=[thick,dotted]
      \node (1) at ( 0,0) [point]  {};
      \draw (1)  node {$1$};
      \node (2) at ( 4,0)  [point]  {};
      \draw (2)  node {$2$};
      \node (3) at ( 8,0)  [point]  {};
      \draw (3)  node {$3$};

      \draw[line width=1pt,color=red]  (1) -- (2);
      \draw[line width=1pt,color=red]  (2) -- (3);
        \end{tikzpicture}
        \end{minipage}
\nn
\eeqa
  which is the Dynkin diagram of the simple Lie algebra $\mathfrak{a}_3$. Obviously Cork$(A_1)=0$.\\

{\bf Step two}\\
Consider now two parallel $2$-cubes $C_2^0$ with vertices $\{0=(0 \, 0 \, 0),1=(1 \, 0 \, 0),2=(0 \, 1 \, 0),3=(1 \, 1 \, 0)\}$ and $C_2^1$ with vertices $\{4=(0 \, 0 \, 1),5=(1 \, 0 \, 1),6=(0 \, 1 \, 1),7=(1 \, 1 \, 1)\}$. We start by removing the $0=(0 \, 0 \, 0)$ vertex from the $C_2^0$ cube and the complementary $7=(1 \, 1 \, 1)$ vertex from the $C_2^1$ cube. Then, the two $2$-cubes are connected by linking similar vertices (see Definition \ref{def:simi}), which yields:

\begin{center}
  \begin{tikzpicture}[thick,scale=.6]
  \coordinate (P0) at (0,0);
    \coordinate (P1) at (4.5,0) ; 
    \coordinate (P4) at (6.6,2.1); 
    \coordinate (P2) at (2.1,2.1);

     \coordinate (P3) at (0,3);
    \coordinate (P7) at  (4.5,3) ;
    \coordinate (P6) at (6.6,5.1); 
    \coordinate (P5) at (2.1,5.1);

  \draw[solid][line width=1pt] (P7)-- (P6) ;
\draw[solid][line width=1pt] (P6)-- (P5) ;
\draw[solid][line width=1pt,color=red] (P3)-- (P5) ;
\draw[solid][line width=1pt,color=red] (P3)-- (P7) ;

 \draw[solid][line width=1pt] (P0) -- (P1);
 \draw[dashed][line width=1pt] (P0) -- (P2);
   \draw[solid][line width=1pt,color=red] (P1) -- (P4);
 \draw[dashed][line width=1pt,color=red] (P2) -- (P4);

 \draw[solid][line width=1pt] (P0) -- (P3);
 \draw[solid,color=blue][line width=1pt] (P1) -- (P7);
\draw[solid][line width=1pt] (P4) -- (P6);
\draw[dashed][line width=1pt,color=blue] (P2) -- (P5);

\draw (P1)  node {$\bullet$} ;
\draw (P2)  node{$\bullet$} ;
\draw (P3)  node{$\bullet$} ;
\draw (P4)  node{$\bullet$} ;
\draw (P5)  node{$\bullet$} ;
\draw (P7)  node{$\bullet$} ;

\draw (P0) node[below]{$0$} ;
\draw (P1) node[below ]{$1$} ;
    \draw (P2) node[above right]{$2$} ;
    \draw (P3) node[left]{$4$} ;
    \draw (P4) node[above right]{$3$} ;
    \draw (P5) node[ left]{$6$} ;
    \draw (P6) node[right]{$7$} ;
    \draw (P7) node[right=1pt]{$5$} ;
   \end{tikzpicture}
\end{center}

\noi
The two subgraphs $C_2^0$ and $C_2^1$ are identical to that of the previous step, which we highlight by drawing their edges in red. Blue edges are additional ones corresponding to the gluing, while black ones are removed (in agreement with our conventions). Note that the graph produced is $2$-regular, \textit{i.e.} each vertex is connected to two edges. Reordering the labels as follows: $(1,3,2,6,4,5)\to(1,2,3,4,5,6)$, we associate to the graph above the Cartan matrix and Dynkin diagram:

\beqa
\hskip 1.truecm A_2=\begin{pmatrix}2&-1&0&0&0&-1\\
                   -1&2&-1&0&0&0\\
                    0&-1&2&-1&0&0\\
                    0&0&-1&2&-1&0\\
                    0&0&0&-1&2&-1\\
                    -1&0&0&0&-1&2
                    \end{pmatrix}
\hskip 1.truecm \begin{minipage}{10cm}
\begin{tikzpicture}[scale=.35]

    \tikzstyle{point}=[circle,draw, thick, minimum size=.5cm] ] (6. cm)  ;
      \tikzstyle{ligne}=[thick]
      \tikzstyle{pointille}=[thick,dotted]
      \node (1) at ( 0,0) [point]  {};
      \draw (1)  node {$1$};
      \node (2) at ( 4,0)  [point]  {};
      \draw (2)  node {$2$};
      \node (3) at ( 8,0)  [point]  {};
      \draw (3)  node {$3$};
      \node (4) at ( 12,0)  [point]  {};
      \draw (4)  node {$4$};
      \node (5) at ( 16,0)  [point]  {};
      \draw (5)  node {$5$};
       \node (6) at ( 8,4)  [point]  {};
      \draw (6)  node {$6$};

      \draw[line width=1pt,color=red]  (1) -- (2);
      \draw[line width=1pt,color=red]  (2) -- (3);
      \draw[line width=1pt,color=blue]  (3) -- (4);
      \draw[line width=1pt,color=red]  (4) -- (5);
      \draw[line width=1pt,color=red]  (5) -- (6);
      \draw[line width=1pt,color=blue]  (6) -- (1);
        \end{tikzpicture}
        \end{minipage}
\nn
                   \eeqa

\noi
corresponding to the affine Lie algebra $\widehat{\mathfrak{a}}_5$. We find easily that Cork$(A_2)=1$ and that the vector $v_3=\begin{pmatrix}1&1&1&1&1&1\end{pmatrix}^t$ generates the kernel of $A_2$.\\

{\bf Step three}\\
Finally, consider two parallel $3$-cubes: $C_3^0$ with vertices $\{0=(0 \, 0 \, 0 \, 0), 1=(1 \, 0 \, 0 \, 0), 2=(0 \, 1 \, 0 \, 0), 3=(1 \, 1 \, 0 \, 0), 4=(0 \, 0 \, 1 \, 0),5=(1 \, 0 \, 1 \, 0), 6=(0 \, 1 \, 1 \, 0), 7=(1 \, 1 \, 1 \, 0)\}$ and $C_3^1$ with vertices $\{8=(0 \, 0 \, 0 \, 1), 9=(1 \, 0 \, 0 \, 1), 10=(0 \, 1 \, 0 \, 1), 11=(1 \, 1 \, 0 \, 1),12=(0 \, 0 \, 1 \, 1), 13=(1 \, 0 \, 1 \, 1), 14=(0 \, 1 \, 1 \, 1), 15=(1 \, 1 \, 1 \, 1)\}$. We first remove the vertices $0=(0 \, 0 \, 0 \, 0)$ and $7=(1 \, 1 \, 1 \, 0)$ from the $C_3^0$ cube and the complementary vertices $15=(1 \, 1 \, 1 \, 1)$ and $8=(0 \, 0 \, 0 \, 1)$ from the parallel $C_3^1$ cube. Then, we connect the two $3$-cubes by linking similar vertices:

\begin{center}
  \begin{tikzpicture}[thick,scale=.6]
  \coordinate (P0) at (0,0);
    \coordinate (P1) at (4.5,0) ; 
    \coordinate (P4) at (6.6,2.1); 
    \coordinate (P2) at (2.1,2.1);

     \coordinate (P3) at (0,3);
    \coordinate (P7) at  (4.5,3) ;
    \coordinate (P6) at (6.6,5.1); 
    \coordinate (P5) at (2.1,5.1);

  \draw[solid][line width=1pt] (P7)-- (P6) ;
\draw[solid][line width=1pt] (P6)-- (P5) ;
\draw[solid][line width=1pt,color=red] (P3)-- (P5) ;
\draw[solid][line width=1pt,color=red] (P3)-- (P7) ;

 \draw[solid][line width=1pt] (P0) -- (P1);
 \draw[dashed][line width=1pt] (P0) -- (P2);
   \draw[solid][line width=1pt,color=red] (P1) -- (P4);
 \draw[dashed][line width=1pt,color=red] (P2) -- (P4);

 \draw[solid][line width=1pt] (P0) -- (P3);
 \draw[solid,color=red][line width=1pt] (P1) -- (P7);
\draw[solid][line width=1pt] (P4) -- (P6);
\draw[dashed][line width=1pt,color=red] (P2) -- (P5);

\draw (P1)  node {$\bullet$} ;
\draw (P2)  node{$\bullet$} ;
\draw (P3)  node{$\bullet$} ;
\draw (P4)  node{$\bullet$} ;
\draw (P5)  node{$\bullet$} ;
\draw (P7)  node{$\bullet$} ;
\draw (P0) node[below]{$0$} ;
\draw (P1) node[below ]{$1$} ;
    \draw (P2) node[above right]{$2$} ;
    \draw (P3) node[left]{$4$} ;
    \draw (P4) node[above right]{$3$} ;
    \draw (P5) node[ left]{$6$} ;
    \draw (P6) node[right]{$7$} ;
    \draw (P7) node[right=1pt]{$5$} ;

\coordinate (PP0) at (0,6);
    \coordinate (PP1) at (4.5,6) ; 
    \coordinate (PP4) at (6.6,8.1); 
    \coordinate (PP2) at (2.1,8.1);

     \coordinate (PP3) at (0,9);
    \coordinate (PP7) at  (4.5,9) ;
    \coordinate (PP6) at (6.6,11.1); 
    \coordinate (PP5) at (2.1,11.1);

\draw (PP1)  node {$\bullet$} ;
\draw (PP2)  node{$\bullet$} ;
\draw (PP3)  node{$\bullet$} ;
\draw (PP4)  node{$\bullet$} ;
\draw (PP5)  node{$\bullet$} ;
\draw (PP7)  node{$\bullet$} ;

  \draw[solid][line width=1pt] (PP7)-- (PP6) ;
\draw[solid][line width=1pt] (PP6)-- (PP5) ;
\draw[solid][line width=1pt,color=red] (PP3)-- (PP5) ;
\draw[solid][line width=1pt,color=red] (PP3)-- (PP7) ;

 \draw[solid][line width=1pt] (PP0) -- (PP1);
 \draw[dashed][line width=1pt] (PP0) -- (PP2);
   \draw[solid][line width=1pt,color=red] (PP1) -- (PP4);
 \draw[dashed][line width=1pt,color=red] (PP2) -- (PP4);

 \draw[solid][line width=1pt] (PP0) -- (PP3);
 \draw[solid,color=red][line width=1pt] (PP1) -- (PP7);
\draw[solid][line width=1pt] (PP4) -- (PP6);
\draw[dashed][line width=1pt,color=red] (PP2) -- (PP5);

\draw (PP0) node[below]{$8$} ;
\draw (PP1) node[below ]{$9$} ;
    \draw (PP2) node[above right]{$10$} ;
    \draw (PP3) node[left]{$12$} ;
    \draw (PP4) node[above right]{$11$} ;
    \draw (PP5) node[ left]{$14$} ;
    \draw (PP6) node[right]{$15$} ;
    \draw (PP7) node[right=1pt]{$13$} ;

\draw (PP1)[color=blue] to[out=0,in=0, looseness=1.] (P1);
\draw (PP2)[color=blue] to[out=180,in=180, looseness=.5] (P2);
\draw (PP3)[color=olive] to[out=180,in=180, looseness=.5] (P3);
\draw (PP4)[color=blue] to[out=0,in=0, looseness=.5] (P4);
\draw (PP5)[color=olive] to[out=180,in=180, looseness=.3] (P5);
\draw (PP7)[color=olive] to[out=0,in=0, looseness=.5] (P7);
   \end{tikzpicture}
\end{center}
\noi The two subgraphs $C_3^0$ and $C_3^1$ are the same as in the previous step, which we highlight by drawing the corresponding  edges  in red.  Blue and green edges arise from the gluing. The graph produced is $3$-regular. Reordering the vertices $(1,3,2,6,4,5)\to(1,2,3,4,5,6)$ and $(9,11,10,14,12,13)\to(7,8,9,10,11,12)$, one obtains the Cartan matrix
\beqa
A_3=\begin{pmatrix}A_2&-\mathrm{Id}\\-\mathrm{Id}&A_2\end{pmatrix}\nn\
\eeqa
where $A_2$ is the matrix of step two and Id the $6\times 6$ identity matrix. We dub the corresponding Dynkin diagram ``the aztec pyramid":

\begin{center}
\begin{tikzpicture}[scale=.3]

   \tikzstyle{point}=[circle,draw, thick, minimum size=.5cm] ] (6. cm)  ;
\tikzstyle{ligne}=[thick]
\tikzstyle{pointille}=[thick,dotted]
\node (0) at ( 0,0) [point]  {};
\draw (0)  node {$1$};

\node (2) at ( 0,4)  [point]  {};
\draw (2)  node {$2$};
\node (4) at ( -3,7)  [point]  {};
\draw (4)  node {$3$};
\node (10) at ( -6,10)  [point]  {};
\draw (10)  node {$4$};
\node (8) at ( -6,-6)  [point]  {};
\draw (8)  node {$5$};
\node (6) at ( -3,-3)  [point]  {};
\draw (6)  node {$6$};
\draw[line width=1pt,color=red]  (0) -- (2);
\draw[line width=1pt,color=red]  (2) -- (4);
\draw[line width=1pt,color=red]  (4) -- (10);
\draw[line width=1pt,color=red]  (10) -- (8);
\draw[line width=1pt,color=red]  (8) -- (6);
\draw[line width=1pt,color=red]  (6) -- (0);
\node (1) at ( 4,0)  [point]  {};
\draw (1)  node {$7$};
\node (3) at ( 4,4)  [point]  {};
\draw (3)  node {$8$};
\node (5) at ( 7,7) [point]  {};
\draw (5)  node {$9$};
\node (11) at ( 10,10) [point]  {};
\draw (11)  node {$10$};
\node (9) at ( 10,-6)  [point]  {};
\draw (9)  node {$11$};
\node (7) at ( 7,-3)  [point]  {};
\draw (7)  node {$12$};
\draw[line width=1pt,color=red]  (1) -- (3);
\draw[line width=1pt,color=red]  (3) -- (5);
\draw[line width=1pt,color=red]  (5) -- (11);
\draw[line width=1pt,color=red]  (11) -- (9);
\draw[line width=1pt,color=red]  (9) -- (7);
\draw[line width=1pt,color=red]  (7) -- (1);
\draw[line width=1pt,color=blue]  (0) -- (1);
\draw[line width=1pt,color=blue]  (2) -- (3);
\draw[line width=1pt,color=blue]  (4) -- (5);
\draw[line width=1pt,color=olive]  (10) -- (11);
\draw[line width=1pt,color=olive]  (8) -- (9);
\draw[line width=1pt,color=olive]  (6) -- (7);

\end{tikzpicture}
\end{center}

\noi
The Cartan matrix is such that Cork$(A_3)=2$, and the study of the associated Lie algebra is performed in Appendix \ref{app:A4}. Having found a method well-suited to obtain Kac--Moody algebras of corank higher than $1$, we proceed to the general step. Beyond this point, we do not fix the number of links $L$ between vertices.\\

{\bf Higher-order steps}\\
Starting from the $4$-cube with missing vertices of the previous step, we now simply iterate the following procedure: we build an $(n+1)$-cube by duplicating the $n$-cube and connecting similar vertices with $L$ links. At step $n$, we thus get an $(n+1)$-cube with some missing vertices and some edges made of $L$ links. For simplicity, we will call them cubes but keep in mind this subtlety. \\

The associated Cartan matrices also satisfy a recursive pattern: from the Cartan matrix $A^{(L)}_{n}$ of the $(n+1)$-cube of step $n$, we obtain the Cartan matrix at step $n+1$ (associated to an $(n+2)$-cube) as:
\beqa
A^{(L)}_{n+1} =\begin{pmatrix} A^{(L)}_{n}&-L \ \mathrm{Id}\\-L \ \mathrm{Id}&A^{(L)}_{n} \end{pmatrix}\ \nn
\eeqa
where $\operatorname{Id}$ is the identity matrix. The block form of the matrix $A^{(L)}_{n+1}$  allows us to claim, using \eqref{eq:det}, that
\beqa
\det (\lambda \mathrm{Id}-A^{(L)}_{n+1})=
\det\big((\lambda-L) \mathrm{Id}-A^{(L)}_{n}\big)\det\big((\lambda+L) \mathrm{Id}-A^{(L)}_{n})\big)\nn
\eeqa
Denoting $P^{(L)}_n$ the characteristic polynomial of $A^{(L)}_{n}$, we thus have the recursive
relation
\beqa
\label{eq:Pnn+1}
P^{(L)}_{n+1}(\lambda)= P^{(L)}_n(\lambda-L)\times P^{(L)}_n(\lambda+L)
\eeqa
Let $\lambda$ be an eigenvalue of $P^{(L)}_{n}$ and let $d^{(L)}_n(\lambda)$ be the
degeneracy of the eigenvalue $\lambda$ for $A^{(L)}_{n}$. From \eqref{eq:Pnn+1} we find:
\beqa
\label{eq:dLn+1}
d^{(L)}_{n+1}(\lambda)=d^{(L)}_n(\lambda-L)+d^{(L)}_n(\lambda+L)
\eeqa
The recursive relation determines the full spectrum for all $n\geq3$ and all positive integers $L$. The seed spectrum is
\[
        P_3(\lambda)=\lambda^{2}(\lambda+1)(\lambda-1)(\lambda-2)^4(\lambda-3)(\lambda-4)^2(\lambda-5)
\]
so the seed eigenvalues and multiplicities are
\begin{equation}\label{eq:first-seed-spectrum}
\mathcal S_3=\{-1,0,1,2,3,4,5\},
\quad
d_3(-1),d_3(0),d_3(1),d_3(2),d_3(3),d_3(4),d_3(5)
=(1,2,1,4,1,2,1)\nn
\end{equation}

\begin{theorem}[Spectrum of the first family for any $L$]\label{thm:first-family-all-L}
For $n\geq3$, the spectrum of $A_n^{(L)}$, counted with algebraic multiplicities, is given by
\begin{equation}\label{eq:first-spectrum-all-L}
        \Spec\bigl(A_n^{(L)}\bigr)
        =
        \left\{
        \lambda_{\mathcal S_3}+L(n-3-2j):
        \lambda_{\mathcal S_3}\in\mathcal S_3,
        \;0\leq j\leq n-3
        \right\}
\end{equation}
where the contribution of the pair $(\lambda_{\mathcal S_3},j)$ has multiplicity
\begin{equation}\label{eq:first-multiplicity-all-L}
        d_3(\lambda_{\mathcal S_3})\binom{n-3}{j}
\end{equation}
Equivalently, for any $\lambda\in\mathbb Z$,
\begin{equation}\label{eq:first-degeneracy-all-L}
        d^{(L)}_n(\lambda)
        =
        \sum_{\lambda_{\mathcal S_3}\in\mathcal S_3}
        d_3(\lambda_{\mathcal S_3})
        \binom{n-3}{\frac{n-3}{2}+\frac{\lambda_{\mathcal S_3}-\lambda}{2L}}
\end{equation}
In particular,
\begin{equation}\label{eq:first-corank-all-L}
        \Cork\bigl(A_n^{(L)}\bigr)
        =
        \sum_{\lambda_{\mathcal S_3}\in\mathcal S_3}
        d_3(\lambda_{\mathcal S_3})
        \binom{n-3}{\frac{n-3}{2}+\frac{\lambda_{\mathcal S_3}}{2L}}
\end{equation}
\end{theorem}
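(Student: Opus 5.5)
The plan is to argue by induction on $n\geq 3$, using the Doubling spectral lemma (Lemma~\ref{lem:doubling-spectrum}) as the only engine. The seed is $A_3^{(L)}=A_3$, and the recursion $A_{n+1}^{(L)}=\mathcal D_L(A_n^{(L)})$ holds for $n\geq 3$. The seed spectrum $\mathcal S_3$ with multiplicities $d_3$ is read off from $P_3$. At $n=3$ the claimed multiset is just $\{\lambda_{\mathcal S_3}\}$ with multiplicities $d_3(\lambda_{\mathcal S_3})\binom{0}{0}$, which is the base case.

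For the inductive step, assume $\Spec(A_n^{(L)})$ is the multiset of values $\lambda_{\mathcal S_3}+L(n-3-2j)$, each with weight $d_3(\lambda_{\mathcal S_3})\binom{n-3}{j}$. By Lemma~\ref{lem:doubling-spectrum}, $\Spec(A_{n+1}^{(L)})$ is the disjoint union of this multiset shifted by $-L$ and by $+L$. The $-L$ shift sends the label $(\lambda_{\mathcal S_3},j)$ to $\lambda_{\mathcal S_3}+L((n+1)-3-2(j+1))$. The $+L$ shift sends it to $\lambda_{\mathcal S_3}+L((n+1)-3-2j)$. Regrouping by the new index $j'$, the weight attached to $(\lambda_{\mathcal S_3},j')$ is $d_3(\lambda_{\mathcal S_3})\bigl[\binom{n-3}{j'-1}+\binom{n-3}{j'}\bigr]=d_3(\lambda_{\mathcal S_3})\binom{n-2}{j'}$ by Pascal's rule. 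This holds for all $0\le j'\le n-2$ under the paper's convention that out-of-range binomials vanish. This proves the multiset form with the stated multiplicities. Equivalently, one can phrase the induction through the characteristic polynomial identity \eqref{eq:Pnn+1}, which shows that $P_n^{(L)}(\lambda)=\prod_{\mu\in\mathcal S_3}\prod_{j}(\lambda-\mu-L(n-3-2j))^{d_3(\mu)\binom{n-3}{j}}$.

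For the degeneracy formula \eqref{eq:first-degeneracy-all-L}, fix $\lambda\in\mathbb Z$. For each seed eigenvalue $\lambda_{\mathcal S_3}$, at most one $j$ satisfies $\lambda=\lambda_{\mathcal S_3}+L(n-3-2j)$, because $L\neq 0$. Solving gives $j=\frac{n-3}{2}+\frac{\lambda_{\mathcal S_3}-\lambda}{2L}$. If this is not an integer in $[0,n-3]$, the convention sets the binomial to zero, matching the absence of a contribution. Distinct pairs $(\lambda_{\mathcal S_3},j)$ may produce the same value of $\lambda$, and the algebraic multiplicities then simply add, which gives the sum over $\mathcal S_3$.

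For the corank formula \eqref{eq:first-corank-all-L}, note that $A_n^{(L)}$ is real symmetric, so algebraic and geometric multiplicities coincide. Hence $\Cork(A_n^{(L)})=\dim\ker A_n^{(L)}=d_n^{(L)}(0)$, and we set $\lambda=0$ in the previous formula. The only point needing care is this bookkeeping: coincidences between different seed eigenvalues, and the use of symmetry to pass from algebraic multiplicity to corank. There is no real obstacle beyond the Pascal reindexing.
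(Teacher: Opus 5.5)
Your proposal is correct and follows essentially the paper's route: both iterate Lemma~\ref{lem:doubling-spectrum} from the seed $A_3$, and your induction with Pascal's rule formalizes the paper's count of the $\binom{n-3}{j}$ sequences containing exactly $j$ shifts by $-L$. You then solve $\lambda=\lambda_{\mathcal S_3}+L(n-3-2j)$ for $j$, as the paper does, and your use of the symmetry of $A_n^{(L)}$ to identify the algebraic multiplicity of $0$ with $\dim\ker$ makes explicit a step the paper leaves implicit.
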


\begin{proof}
The assertion follows by iterating Lemma~\ref{lem:doubling-spectrum}. At each of the $n-3$ recursive doublings, an eigenvalue is shifted either by $+L$ or by $-L$. If exactly $j$ of these shifts are $-L$ and $n-3-j$ are $+L$, the total shift is $L(n-3-2j)$, and there are $\binom{n-3}{j}$ such choices. Multiplying by the seed multiplicity $d_3(\lambda_{\mathcal S_3})$ gives \eqref{eq:first-multiplicity-all-L}. Formulae \eqref{eq:first-degeneracy-all-L} and \eqref{eq:first-corank-all-L} are obtained by solving $\lambda=\lambda_{\mathcal S_3}+L(n-3-2j)$ for $j$.
\end{proof}

We emphasize the following: the eigenvalues are not, for arbitrary $L$, all consecutive integers between the two extreme values. They are obtained from the seed eigenvalues by the shifts in \eqref{eq:first-spectrum-all-L}. For instance, at step $4$ and $L=4$ one obtains
\[
\{-5,-4,-3,-2,-1,0,1\}\sqcup\{3,4,5,6,7,8,9\}
\]
so the integer $2$ is absent. The eigenvalues are consecutive only when the shifted seed intervals overlap sufficiently, for example for $L=1,2,3$ in the first step after the seed.

In order to gain insight on our construction and supplement the results of Theorem~\ref{thm:first-family-all-L}, we derive the same results from another perspective. In this approach, the degeneracy formulas will be more compact and thus easier to handle. We iterate once \eqref{eq:dLn+1} to get
\beqa
d^{(L)}_{n+2}(\lambda)=d^{(L)}_n(\lambda-2L)+2 d^{(L)}_n(\lambda)+d^{(L)}_n(\lambda+2L)  \nn
\eeqa
This recursive relation, involving only eigenvalues equal modulo $2L$, is analogous to another involving binomial coefficients:
\beqa
\begin{pmatrix}i+2\\j+1\end{pmatrix}=
\begin{pmatrix}i \\j+1\end{pmatrix}+
2 \begin{pmatrix}i \\j\end{pmatrix}+
\begin{pmatrix}i \\j-1\end{pmatrix} \nn
\eeqa
The recursion relation being linear, we thus look for $d^{(L)}_n(\lambda)$ as sums of coefficients of the form
\beqa\label{eq:degeneracyAnsatz}
a \begin{pmatrix}n-b\\ \frac{\lambda-r}{2L} +\frac{n-k}2 +c\end{pmatrix}
\eeqa
where the $n,\lambda$ dependence is dictated by the identification of the two analogous recursive relations: the top integer should increase by $2$ when $n$ does, while the bottom integer should increase by $1$ when either $\lambda$ increases by $2L$ or $n$ increases by $2$. The integer coefficients $a,b,c,k,r$ are identified by matching some initial conditions, with the constraints that $k = n$ mod $2$ and $r = \lambda $ mod $2L$. (Actually, $c=0$ or $1$ is sufficient as other values can be reabsorbed in $k$, while values other than $k=0,...,2L-1$ can be reabsorbed in $r$.) Since the recursion relation relates degeneracies at step $n$ and $n+2$, odd and even values of $n$ have to be distinguished.\\

\subsubsection{$L=2$ case}

We now focus on the case where double links connect the cubes, \textit{i.e.} $L=2$. According to the previous discussion, there exists a recursive relation between the degeneracies of eigenvalues equal modulo $4$ at steps $n$ and $n+2$. In order to identify the coefficients $a,b,c,k,r$ of the general ansatz for the degeneracy \eqref{eq:degeneracyAnsatz}, we consider the two characteristic polynomials $P_3$ and $P^{(2)}_4$:
\beqa
P_3(\lambda)=\lambda^{2} \left(\lambda +1\right)\left(\lambda -1\right)
\left(\lambda -2\right)^{4} \left(\lambda -3\right)\left(\lambda -4\right)^{2}
\left(\lambda -5\right) \nn
\eeqa
\beqa
P^{(2)}_4(\lambda)= \lambda^4 (\lambda +3) (\lambda +2)^2 (\lambda +1) (\lambda -1)^2 (\lambda -2)^4  (\lambda -3)^2 (\lambda -4)^4  (\lambda -5) (\lambda -6)^2  (\lambda -7) \nn
\eeqa
Let us detail the computation for the case of $d^{(2)}_{n=3}(\lambda =0\mod 4)$. We notice that
\beqa
d^{(2)}_3(0)=2=2\begin{pmatrix}1\\0\end{pmatrix} , \qquad d^{(2)}_3(4)=2\begin{pmatrix}1\\1\end{pmatrix}\nn
\eeqa
while, assuming that a single term of the form \eqref{eq:degeneracyAnsatz} is sufficient,
\beqa
d^{(2)}_3(0)=a\begin{pmatrix}3-b\\\frac{-r}{4}+\frac{3-k}{2}+c\end{pmatrix} , \qquad d^{(2)}_3(4)=a\begin{pmatrix}3-b\\\frac{4-r}{4}+\frac{3-k}{2}+c\end{pmatrix}\nn
\eeqa
One easily sees that $a=2,b=2,c=0,k=3,r=0$ is a solution. Following the same reasoning, we obtain all degeneracies reported in Table~\ref{table:degFirstL2}.

\begin{table}[h!]
\centering
\begin{tabular}{c|cccc}
$d^{(2)}_n(\lambda)$ & $\ \ \lambda=0\ \  \text{mod}\ \  4 \ \ $ & $\ \  \lambda=1\ \  \text{mod}\ \  4 \ \  $ & $\ \  \lambda=2\ \  \text{mod}\ \  4\ \  $ & $\ \  \lambda=3\ \  \text{mod}\ \  4\ \  $ \\
\hline
\vspace{-8pt}& & & &\\
even $n$ & $4 \begin{pmatrix}n-3\\\frac \lambda 4 +\frac{n-4}2\end{pmatrix}$ & $\begin{pmatrix}n-2\\\frac {\lambda+3} 4 +\frac{n-4}2\end{pmatrix}$ & $2 \begin{pmatrix}n-2\\\frac {\lambda+2} 4 +\frac{n-4}2\end{pmatrix}$ & $\begin{pmatrix}n-2\\\frac {\lambda+1} 4 +\frac{n-4}2\end{pmatrix}$ \\
\vspace{-8pt}& & & &\\
\hline
\vspace{-8pt}& & & &\\
odd $n$ & $2 \begin{pmatrix}n-2\\\frac \lambda 4 +\frac{n-3}2\end{pmatrix} $ & $\begin{pmatrix}n-2\\\frac {\lambda+3} 4 +\frac{n-5}2\end{pmatrix}$ & $4 \begin{pmatrix}n-3\\\frac {\lambda-2} 4 +\frac{n-3}2\end{pmatrix} $ & $\begin{pmatrix}n-2\\\frac {\lambda+1} 4 +\frac{n-3}2\end{pmatrix}$\\
\end{tabular}
\caption{Values of the degeneracies $d^{(2)}_n(\lambda)$ for the first family of constructions when $L=2$.}
\label{table:degFirstL2}
\end{table}
In particular, we derive a general formula for the corank:
\beqa
\text{Cork}(A^{(2)}_{n})= \left\{
\begin{array}{cll}
4 \begin{pmatrix}n-3\\\frac{n-4}2\end{pmatrix}= 2 \begin{pmatrix}n-2\\\frac{n-2}2\end{pmatrix}&\text{if}&n\ \ \text{is even}\ \\
 2 \begin{pmatrix}n-2\\\frac{n-3}2\end{pmatrix} =  \begin{pmatrix}n-1\\\frac{n-1}2\end{pmatrix}&\text{if}&n\ \ \text{is odd}\ \\
\end{array}
\right. \nn
\eeqa
We report the values of the coranks up to step $12$ in Table~\ref{table:crks}. Computing ratios yields:
\beqa
\frac{\text{Cork}(A^{(2)}_{2m+1})}{\text{Cork}(A^{(2)}_{2m})}=\frac{2m-1}{m}\ , \quad
\frac{\text{Cork}(A^{(2)}_{2m+2})}{\text{Cork}(A^{(2)}_{2m+1})}=2\  \nn
\eeqa
We thus have generated, with a recursive procedure, an infinite family of Kac--Moody algebras of strictly (exponentially) increasing coranks
. We stress that the Cartan matrix has more than one negative eigenvalue and that the associated Dynkin diagram is highly connected, since any vertex of the associated $(n+1)$-cube is connected to $n+1$ other vertices (three vertices are connected by a single line and the remaining $n-2$ by a double line).

\subsubsection{$L=1$ case}

We now turn to the case where there is a single link between cubes, \textit{i.e.} $L=1$. Eigenvalues connected by the recursion relation are equal modulo $2$, and the coefficients $a,b,c$ are easily identified considering the two characteristic polynomials $P_3$ and $P^{(1)}_4$:
\beqa
P_3(\lambda)=\lambda^{2} \left(\lambda +1\right)\left(\lambda -1\right)
\left(\lambda -2\right)^{4} \left(\lambda -3\right)\left(\lambda -4\right)^{2}
\left(\lambda -5\right) \nn
\eeqa
\beqa
P^{(1)}_4(\lambda)= \lambda^2 (\lambda +2) (\lambda +1)^2 (\lambda -1)^6 (\lambda -2)^2 (\lambda -3)^6 (\lambda -4)^2 (\lambda -5)^2 (\lambda -6) \nn
\eeqa
(Actually, the general recursion for this case already starts at the matrix $A_2$, for which $P_2(\lambda)=\lambda(\lambda-1)^2(\lambda-3)^2(\lambda-4)$.)
Following the same approach as when $L=2$, we obtain the degeneracies reported in Table~\ref{table:degFirstL1}.

\begin{table}[h!]
\centering
\begin{tabular}{c|cc}
$d^{(1)}_n(\lambda)$ & $\quad \lambda\ \  \text{even} \quad $ & $\quad  \lambda\ \  \text{odd} \quad  $ \\
\hline
\vspace{-8pt}& &\\
even $n$ & \quad $\begin{pmatrix}n-2\\\frac \lambda 2 +\frac{n-2}2\end{pmatrix}+\begin{pmatrix}n-2\\\frac \lambda 2 +\frac{n-6}2\end{pmatrix}\quad $ & $2 \begin{pmatrix}n-1\\\frac {\lambda-1} 2 +\frac{n-2}2\end{pmatrix}$ \\
\vspace{-8pt}& &\\
\hline
\vspace{-8pt}& &\\
odd $n$ & $2 \begin{pmatrix}n-1\\\frac {\lambda} 2 +\frac{n-3}2\end{pmatrix} $ & $\begin{pmatrix}n-2\\\frac {\lambda-1} 2 +\frac{n-1}2\end{pmatrix}
+ \begin{pmatrix}n-2\\\frac {\lambda-1} 2 +\frac{n-5}2\end{pmatrix}$\\
\end{tabular}
\caption{Values of the degeneracies $d^{(1)}_n(\lambda)$ for the first family of constructions when $L=1$.}
\label{table:degFirstL1}
\end{table}

Let us comment on the appearance of two separated binomial coefficients, focusing on the case of $d^{(1)}_{n=3}(\lambda =1\mod 2)$. We have that
\beqa
d^{(1)}_3(-1)=d^{(1)}_3(1)=d^{(1)}_3(3)=d^{(1)}_3(5)=1\nn
\eeqa
while, if only one binomial coefficient were sufficient,
\beqa
d^{(1)}_3(\lambda)=a\begin{pmatrix}3-b\\\frac{\lambda-r}{2}+\frac{3-k}{2}+c\end{pmatrix}\nn
\eeqa
However, the only sets of binomial coefficients $\left\{\begin{pmatrix}i\\j\end{pmatrix}, j\in \mathbb{N}\right\}$ where all values are equal are those when $i=0,1$, and they contain either one or two non-vanishing coefficients, not four. Consequently, we need to sum at least two binomial coefficients of the form \eqref{eq:degeneracyAnsatz}, with two sets of constants $a,b,c,k,r$. Two are sufficient, since we can write
\beqa
d^{(1)}_3(-1)=\begin{pmatrix}1\\0\end{pmatrix} \ , \qquad d^{(1)}_3(1)=\begin{pmatrix}1\\1\end{pmatrix} \ , \qquad d^{(1)}_3(3)=\begin{pmatrix}1\\0\end{pmatrix} \ , \qquad d^{(1)}_3(5)=\begin{pmatrix}1\\1\end{pmatrix}\nn
\eeqa
The need for two coefficients can also be traced back to the fact that $\lambda=2$ is not an eigenvalue of $A_2$, while $\lambda=1,3$ are.\\

A general formula for the corank again follows:
\beqa
\text{Cork}(A^{(1)}_{n})= \left\{
\begin{array}{cll}
\begin{pmatrix}n-2\\\frac{n-2}2\end{pmatrix}
+\begin{pmatrix}n-2\\\frac{n-6}2\end{pmatrix}&\text{if}&n\ \ \text{is even}\ \\
 2 \begin{pmatrix}n-1\\\frac{n-3}2\end{pmatrix}&\text{if}&n\ \ \text{is odd}\ \\
\end{array}
\right. \nn
\eeqa
We again report the values of the coranks up to step $12$ in Table~\ref{table:crks}. Computing ratios yields:
\beqa
\frac{\text{Cork}(A^{(1)}_{2m+1})}{\text{Cork}(A^{(1)}_{2m})}= \frac{2m(2m-1)}{m^2-m+1} > 4 \ \text{for $m > 2$} \ , \quad
\frac{\text{Cork}(A^{(1)}_{2m+2})}{\text{Cork}(A^{(1)}_{2m+1})}= \frac{m^2+m+1}{m(m+2)} < 1 \ \text{for $m > 1$}  \nn
\eeqa
We thus have constructed again, recursively, an infinite family of Kac--Moody algebras of high corank. However, the behavior of the corank along the recursion is different: rather than increasing monotonically, it oscillates because of $\text{Cork}(A^{(1)}_{2m+1}) > \text{Cork}(A^{(1)}_{2m})$ and yet $\text{Cork}(A^{(1)}_{2m+2}) < \text{Cork}(A^{(1)}_{2m+1})$. (The tendency is nonetheless again an exponential growth.) The Cartan matrices still have more than one negative eigenvalue and the Dynkin diagrams still are highly connected.
\\

We summarize the results of the two constructions in Table~\ref{table:crks}. While $L=2$ yields monotonically increasing coranks, $L=1$ sweeps through a greater variety of distinct corank values as it oscillates between steps.
\begin{table}[h]
\begin{center}
\begin{tabular}{c|c|c|c|c}
Step&Cartan&Size&Corank $L=1$&Corank $L=2$\\ \hline
1&$A^{(L)}_1$&3&0&0\\
2&$A^{(L)}_2$&6&1&1\\
3&$A^{(L)}_3$&12&2&2\\
4&$A^{(L)}_4$&24&2&4\\
5&$A^{(L)}_5$&48&8&6\\
6&$A^{(L)}_6$&96&7&12\\
7&$A^{(L)}_7$&192&30&20 \\
8&$A^{(L)}_8$&384&26&40\\
9&$A^{(L)}_9$&768&112&70 \\
10&$A^{(L)}_{10}$&1536&98&140\\
11&$A^{(L)}_{11}$&3072&420&252\\
12&$A^{(L)}_{12}$&6144&372&504\\
\end{tabular}
\end{center}
\caption{Values of the coranks for the first family of constructions when $n\leq 12$ and $L=1,2$.}
\label{table:crks}
\end{table}
\\

\begin{figure}[h!]
\begin{center}
\includegraphics[width=0.7\textwidth]{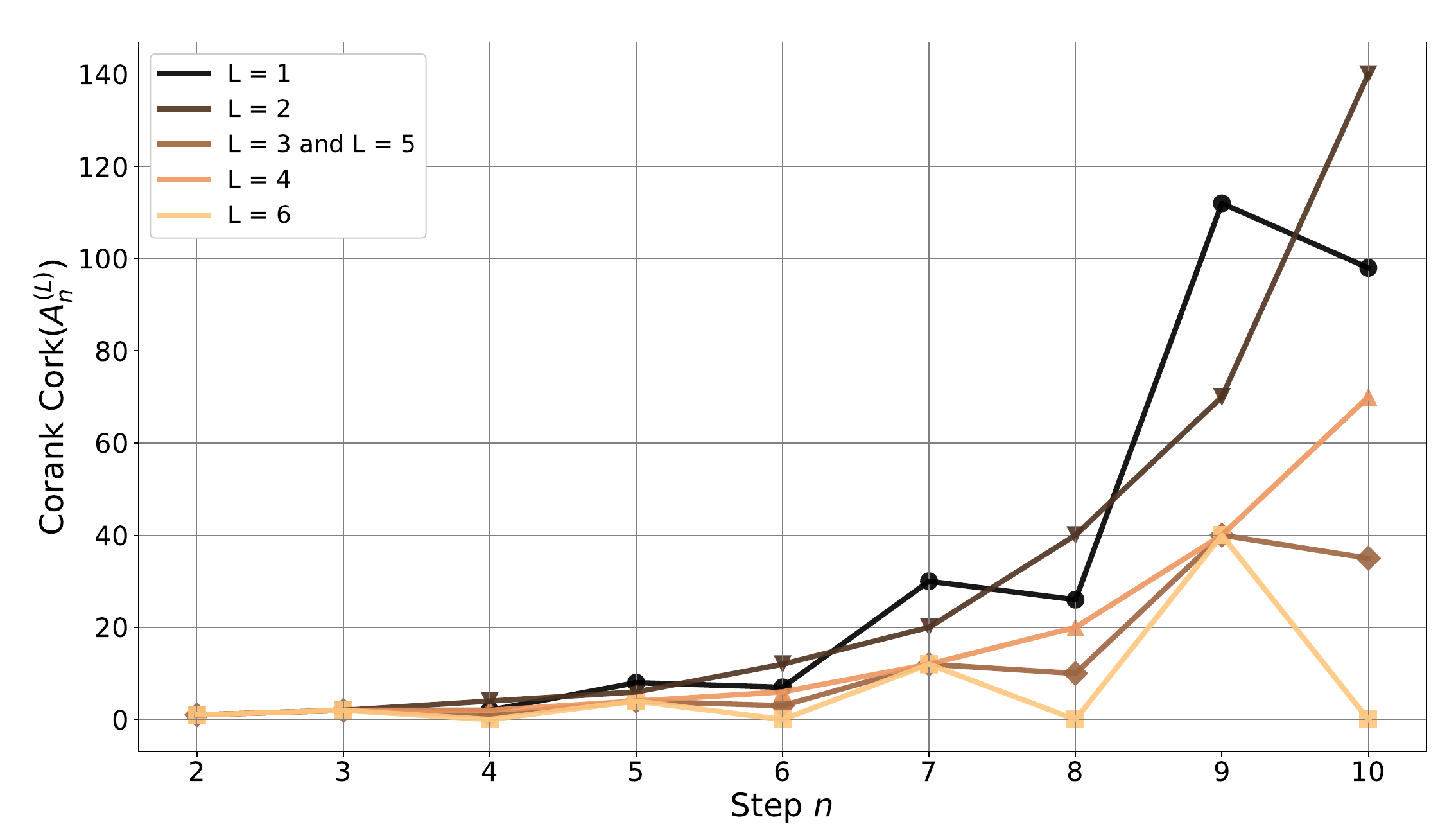}
\end{center}
\captionsetup{width=0.7\textwidth} 
\caption{Evolution of the coranks depending on the value of $L$. We observe two main trends, either a monotonic growth, or oscillations.}
\label{fig:crks}
\end{figure}

For the sake of completeness, we also summarize the results for more values of $L$ in Figure~\ref{fig:crks}. We observe different behaviors. First, for $L \geq 6$, the coranks become independent of $L$ and coincide at each step. This is due to the fact that the eigenvalues at step $3$ are $-1,0,1,2,3,4,5$, so $\lambda\not = 0\pmod L$ for any other $\lambda$. Therefore, the corank is simply given by (see Theorem~\ref{thm:first-family-all-L})
\beqa
2\begin{pmatrix}n-3\\\frac{n-3}{2}\end{pmatrix} \nn
\eeqa
for odd $n$ and $0$ for even $n$. Furthermore, since at step $3$ we have $\lambda\not = 0\pmod {2L}$ for any other $\lambda$, coranks agree for all $L\geq 3$ when $n$ is odd. When $n$ is even and $L<6$, there are two different trends. When $L$ is even, \textit{i.e.} for $L=2$ and $L=4$, the corank is strictly growing. For odd $L$, we see an alternating trend: between an odd step and an even step the corank decreases but between an even step and an odd step the corank increases. Finally, the coranks for $L=3$ are systematically lower than for $L=1$. All of these trends can be understood from the multiplicities at step $3$: they are larger for even eigenvalues than for odd ones, and among even eigenvalues $0$ has the smallest multiplicity. Therefore, in cases where only even eigenvalues of steps $m<n$ contribute to the corank at step $n$, which happens when $L$ is even, the corank can only increase with $n$. In cases when $L$ is odd and $n$ is even, the corank originates from the propagation through the recursion relation of the degeneracies of odd eigenvalues of step $3$, and is expected to be small. Finally, when $L=3$, the initial eigenvalues congruent to $0$ modulo $L$ are precisely $0$ and $3$, and therefore only the propagation through the recursion relation of the degeneracies of eigenvalues $0$ and $3$ of step $3$ can affect the corank. When $L=1$, $0=\lambda$ mod $1$ for all eigenvalues $\lambda$ at step $3$. The formulae we have found are coherent with Theorem~\ref{thm:first-family-all-L}.

\begin{remark}[Absolute versus relative growth]\label{rem:absolute-relative-growth-first}
The word ``high-corank'' refers here to absolute corank as a function of the recursive step: central-binomial estimates show that the coranks occurring in the formulae above grow exponentially in $n$ along the relevant subsequences. Since the size of the matrices in the first family is $3\cdot 2^{n-1}$, the ratio of the corank to the full matrix size is typically of order $n^{-1/2}$ rather than bounded away from zero. For example, in the $L=2$ even subsequence,
\[
        \Cork(A_{2m}^{(2)})=2\binom{2m-2}{m-1}
        \sim \frac{2^{2m-1}}{\sqrt{\pi(m-1)}}
        \qquad (m\to\infty)
\]
whereas the matrix size is $3\cdot 2^{2m-1}$.
\end{remark}

\subsection{Second family of constructions}

The first family of constructions used $n$-cubes with missing vertices and edges in order to explicitly generate the algebra. A different construction, presented here, avoids introducing such missing vertices and edges and instead builds the entire structure directly from regular $n$-cubes.\\

The construction starts at step $1$ with a zero-cube, which has one vertex and zero edge. The corresponding Cartan matrix is $C_1 = 2$ and corresponds to $\mathfrak{a}_1$. Then, very similarly to the previous approach, two parallel $n$-cubes are linked with $L$ links to construct an $(n+1)$-cube and its Cartan matrix. The general inductive formula for the Cartan matrix is the following: if the Cartan matrix associated to a $(n-1)$-cube at step $n$ is called $C^{(L)}_{n}$, we have at step $n+1$:
\beqa
C^{(L)}_{n+1} =\begin{pmatrix} C^{(L)}_{n}&-L \ \mathrm{Id}\\-L \ \mathrm{Id}&C^{(L)}_{n} \end{pmatrix}\ \nn
\eeqa
where $\operatorname{Id}$ is the identity matrix. (Notice the different use of $n$ with respect to the first family of constructions, where one had a $(n+1)$-cube at step $n$.) If $R^{(L)}_n$ is the characteristic polynomial of $C^{(L)}_n$, we have, as in the previous case,
\beqa
R^{(L)}_{n+1}(\lambda) = R^{(L)}_n(\lambda-L)\times R^{(L)}_n(\lambda+L) \nn
\eeqa

Since $R_1^{(L)}(\lambda)=\lambda-2$, Lemma~\ref{lem:doubling-spectrum} gives the whole spectrum immediately.

\begin{theorem}[Spectrum of the second family for any $L$]\label{thm:second-family-all-L}
For every $n\geq1$ and every positive integer $L$,
\begin{equation}\label{eq:second-spectrum-all-L}
        \Spec\bigl(C_n^{(L)}\bigr)
        =
        \left\{2+L(n-1-2j):0\leq j\leq n-1\right\}\nn
\end{equation}
where the eigenvalue corresponding to $j$ has multiplicity $\binom{n-1}{j}$. Equivalently
\begin{equation}\label{eq:second-degeneracy-all-L}
        d^{(L)}_n(\lambda)=
        \binom{n-1}{\frac{n-1}{2}+\frac{2-\lambda}{2L}}\nn
\end{equation}
Consequently,
\begin{equation}\label{eq:second-corank-all-L}
        \Cork\bigl(C_n^{(L)}\bigr)=
        \binom{n-1}{\frac{n-1}{2}+\frac{1}{L}}\nn
\end{equation}
In particular, among positive integers $L$, non-zero corank can occur only for $L=1$ or $L=2$.
\end{theorem}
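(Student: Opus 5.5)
The plan is to iterate Lemma~\ref{lem:doubling-spectrum} starting from the seed $C_1^{(L)}=(2)$, whose spectrum is $\{2\}$ with multiplicity one, exactly as in the proof of Theorem~\ref{thm:first-family-all-L}. I would argue by induction on $n$ that the spectrum of $C_n^{(L)}$, with algebraic multiplicities, is the multiset $\{2+L\sum_{k=1}^{n-1}\epsilon_k:\epsilon_k\in\{\pm1\}\}$. The case $n=1$ is trivial. For the inductive step, the lemma (equivalently the relation $R^{(L)}_{n+1}(\lambda)=R^{(L)}_n(\lambda-L)R^{(L)}_n(\lambda+L)$) says that $\Spec(C^{(L)}_{n+1})$ is the disjoint union of $\Spec(C_n^{(L)})-L$ and $\Spec(C_n^{(L)})+L$, so each existing sign string is extended by one more sign.

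Next I would group the sign strings by the number $j$ of minus signs. Such a string gives the eigenvalue $2+L(n-1-2j)$, and there are $\binom{n-1}{j}$ strings with exactly $j$ minus signs. Since $L>0$, distinct values of $j$ give distinct eigenvalues, so the multiplicity of $2+L(n-1-2j)$ is exactly $\binom{n-1}{j}$. To get the degeneracy formula I would solve $\lambda=2+L(n-1-2j)$ for $j$, which gives $j=\frac{n-1}{2}+\frac{2-\lambda}{2L}$. The paper's convention that the binomial coefficient vanishes for non-integer or out-of-range lower index then covers every $\lambda$ that is not an eigenvalue.

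For the corank I would use that $C_n^{(L)}$ is real symmetric, hence diagonalizable. Its corank therefore equals the multiplicity of the eigenvalue $0$, namely $d_n^{(L)}(0)=\binom{n-1}{\frac{n-1}{2}+\frac1L}$.

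For the final claim, non-vanishing requires $j=\frac{n-1}{2}+\frac1L$ to be an integer. Hence $\frac{2}{L}=2j-(n-1)$ must be an integer, which forces $L\mid 2$, i.e. $L\in\{1,2\}$. For $L\ge3$ we have $0<\frac{2}{L}<1$, so $j$ is never an integer and the corank is zero for every $n$.

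The only step needing care is the passage from eigenvalues to corank, which relies on the symmetry of the matrix. This is immediate here, so I expect no real obstacle. The argument is a direct specialization of the proof of Theorem~\ref{thm:first-family-all-L} with seed spectrum $\{2\}$.
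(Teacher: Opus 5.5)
Your proposal is correct and follows the paper's argument: you iterate the doubling lemma from the seed eigenvalue $2$, count the $\binom{n-1}{j}$ ways to choose $j$ shifts by $-L$, solve $2+L(n-1-2j)=0$ for $j$, and note that integrality of $j$ forces $L\mid 2$. Your extra remarks make explicit what the paper's short proof leaves implicit: distinct $j$ give distinct eigenvalues because $L>0$, and since $C_n^{(L)}$ is symmetric its corank equals the algebraic multiplicity of $0$.
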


\begin{proof}
At each of the $n-1$ recursive steps, the unique seed eigenvalue $2$ is shifted by either $+L$ or by $-L$. If exactly $j$ shifts are $-L$, the resulting eigenvalue is $2+L(n-1-2j)$, with multiplicity $\binom{n-1}{j}$. Setting this eigenvalue equal to zero gives $j=(n-1)/2+1/L$. For $j$ to be integral, given that $L$ is a positive integer, one must have $L=1$ or $L=2$.
\end{proof}


In the case $L=2$, only Cartan matrices at step $2m$ (associated to some $2m-1$-cubes) have a non-vanishing corank, while for $L=1$, only Cartan matrices at step $2m+1$ (associated to some $2m$-cubes) have a non-vanishing corank. Table~\ref{table:crks2} summarizes the values of the coranks for the two cases up to step $12$. A notable feature, unlike the previous family, is the systematic presence of zeros. The non-zero coranks do not oscillate but grow at the steps when they are non-vanishing. This follows from the fact that they all derive from the propagation of the degeneracy of the eigenvalue $2$ of step $1$ through the recursion relation.\\



\begin{table}[h]
\begin{center}
\begin{tabular}{c|c|c|c|c}

Step&Cartan&Size&Corank $L=1$&Corank $L=2$\\ \hline
1&$C^{(L)}_1$& 1&0&0\\
2&$C^{(L)}_2$&2&0&1\\
3&$C^{(L)}_3$&4&1&0\\
4&$C^{(L)}_4$&8&0&3\\
5&$C^{(L)}_5$&16&4&0\\
6&$C^{(L)}_6$&32&0&10\\
7&$C^{(L)}_7$&64&15&0\\
8&$C^{(L)}_8$&128&0&35 \\
9&$C^{(L)}_9$&256&56&0\\
10&$C^{(L)}_{10}$&512&0&126 \\
11&$C^{(L)}_{11}$&1024&210&0\\
12&$C^{(L)}_{12}$&2048&0&462\\
\end{tabular}
\end{center}
\caption{Values of the coranks for the second family of constructions when $n\leq 12$ and $L=1,2$.}
\label{table:crks2}
\end{table}

\begin{remark}[Asymptotics for the second family]
The size of $C_n^{(L)}$ is $2^{n-1}$. For $L=2$ and $n=2m$,
\[
        \Cork(C_{2m}^{(2)})=\binom{2m-1}{m-1}
        \sim \frac{2^{2m-1}}{\sqrt{\pi m}}
\]
For $L=1$ and $n=2m+1$,
\[
        \Cork(C_{2m+1}^{(1)})=\binom{2m}{m-1}
        \sim \frac{4^m}{\sqrt{\pi m}}
\]
Thus the non-zero coranks again grow exponentially in the step, while their relative size is asymptotically of order $n^{-1/2}$.
\end{remark}

\section{Further methods for obtaining high-corank Cartan matrices}\label{sec:further-methods}

This section records several constructions which are not used in the two explicit families above but which show that high corank is a broader phenomenon. The common principle is to make the zero eigenvalue of a generalized Cartan matrix occur with large multiplicity while preserving the Cartan sign conditions and indecomposability.

\subsection*{Spectral graph constructions}
Several graph-theoretic operations exist that create Cartan matrices whose eigenvalue $2$ has high multiplicity. This includes Cartesian products, graph covers, equitable partitions, and constructions from association schemes. The present paper uses a particularly rigid weighted hypercube-type operation, but many standard spectral operations can be adapted; see \cite{BrouwerHaemers,CvetkovicRowlinsonSimic,GodsilRoyle}.


\subsection*{Cartesian products and Kronecker sums}
If $G$ and $H$ are graphs, then the adjacency matrix of the Cartesian product $G\square H$ is the Kronecker sum
\[
        \Adj(G\square H)=\Adj(G)\otimes \mathrm{Id}+\mathrm{Id}\otimes\Adj(H)
\]
Hence its eigenvalues are sums $\lambda_i(G)+\lambda_j(H)$. Consequently,
\[
        2\mathrm{Id}-\Adj(G\square H)
\]
has corank equal to the number of pairs $(i,j)$, counted with multiplicity, such that
\[
        \lambda_i(G)+\lambda_j(H)=2
\]
This gives a flexible way to engineer high coranks. The second family above is essentially a weighted hypercube/Kronecker-sum construction: its binomial multiplicities are exactly the multiplicities produced by repeated two-point Cartesian factors.

\subsection*{Graph lifts and covers}
Graph lifts provide another systematic source of large graphs with controlled spectra. A lift contains old eigenvalues inherited from the base graph and new eigenvalues governed by the covering data. By choosing the covering permutations, or in the simplest case signs in a $2$-lift, one can attempt to preserve or create large eigenspaces at eigenvalue $2$. Iterating such lifts can therefore produce connected multigraphs $G$ for which $2\mathrm{Id}-\Adj(G)$ has high corank. This is conceptually close to the block doubling used here, but general lifts are less rigid because the identity matching between the two copies can be replaced by arbitrary permutations; compare the spectral use of $2$-lifts in \cite{BiluLinial}.

\subsection*{Block-matrix constructions with prescribed null-vectors}
One can also build high-corank generalized Cartan matrices directly by blocks. Suppose $A_1,\ldots,A_s$ are generalized Cartan matrices and one chooses off-diagonal blocks $B_{ij}$ with non-positive entries. If vectors $v_i\in\ker (A_i)$ are prescribed, then requiring
\[
        A_i v_i+\sum_{j\ne i}B_{ij}v_j=0
\]
for several independent choices of $(v_1,\ldots,v_s)$ produces a connected block Cartan matrix with a prescribed kernel. Such constructions are easiest when the seed null-vectors have zero or mixed-sign entries, because strictly positive affine null-vectors are constrained by the corank-one result of Section~\ref{sec:kernels}.

\subsection*{Gram matrices and reflection geometry}
A symmetric generalized Cartan matrix is also a Gram matrix of simple roots with
\[
        (\alpha_i,\alpha_i)=2,
        \qquad
        (\alpha_i,\alpha_j)\leq0 \quad (i\ne j)
\]
Thus one may work in an integral lattice with an indefinite symmetric bilinear form, choose many norm-$2$ vectors satisfying the non-positivity condition off the diagonal, and arrange that they span a vector space of dimension much smaller than their number. The resulting Gram matrix has large corank. This viewpoint connects the problem to Coxeter and reflection geometry, including Vinberg-type constructions of reflection groups in indefinite spaces \cite{Vinberg67}.

\subsection*{Symmetrizable non-symmetric variants}
Starting from a symmetric high-corank generalized Cartan matrix $B$, one may sometimes obtain non-symmetric symmetrizable matrices by diagonal rescaling. If $D$ is a positive diagonal rational matrix such that
\[
        A=D^{-1}B
\]
has integral entries and satisfies the generalized Cartan conditions, then $A$ is symmetrizable and has the same rank and corank as $B$. This produces examples outside the symmetric graph class without changing the kernel dimension.

\subsection*{Borcherds--Kac--Moody extensions}
If one relaxes the ordinary generalized Cartan matrix axioms and allows Borcherds--Cartan matrices, then imaginary simple roots can be introduced and high corank becomes easier to engineer. This is outside the strict framework of the present paper, but it gives a natural direction for comparison with generalized Kac--Moody algebras in the sense of Borcherds \cite{Borcherds88}.

\section{Conclusions and outlook}\label{sec:conclusion}

We have constructed two explicit recursive families of symmetric generalized Cartan matrices with rapidly growing corank. The construction is elementary but rather efficient: one starts from a seed Cartan matrix and repeatedly doubles it by the block operation
\[
        A\longmapsto
        \begin{pmatrix}
        A&-L\ \mathrm{Id}\\
        -L\ \mathrm{Id}&A
        \end{pmatrix}
\]
The operation has a transparent graph-theoretic interpretation and a completely controlled spectral effect. The eigenvalues of the new matrix are obtained by shifting the old eigenvalues by $\pm L$, and the corank is obtained by counting the old eigenvectors with eigenvalues $\pm L$. This explains both the existence of large kernels and the different behaviors observed for different values of the linking parameter $L$.

The main mathematical point is that these matrices are not affine-type degenerations. In the affine case, an indecomposable Cartan matrix has corank one and possesses a primitive null-vector with strictly positive entries. That vector defines the null root, and the corresponding Kac--Moody algebra admits the familiar central element and derivation of the loop-algebra realization. For the high-corank matrices considered here, the situation is qualitatively different. The kernel has dimension greater than one, and no non-zero kernel vector can have all entries of the same sign. Thus the associated integral radical vectors are isotropic elements of the root lattice, but they are not roots. The radical is multi-dimensional, the center in the derived Cartan subalgebra is multi-dimensional, and the minimal realization requires several independent derivation directions.

This observation suggests a possible interpretation of the corank of a generalized Cartan matrix as a measure of the number of independent algebraic null directions carried by the corresponding realization. In finite types, the corank is zero and there is no radical. In affine types, the corank is one and the single null direction admits a loop-theoretic interpretation. In the present high-corank setting, one obtains a multi-null structure. At a purely mathematical level, this means that the invariant bilinear form on the root lattice has a radical of dimension larger than one. At a physical or geometric level, it is tempting to interpret these directions as algebraic analogues of independent central charges, mutually orthogonal lightlike sectors, degenerating moduli, or simultaneous constraints. Such an interpretation is necessarily speculative at this stage, but it gives a useful language for comparing high-corank Kac--Moody algebras with better-known symmetry algebras in mathematical physics.\\

Several mathematical developments are natural. First, one should classify, or at least systematically organize, connected multigraphs $G$ for which the adjacency eigenvalue $2$ has large multiplicity. Since
\[
        \Cork(2\mathrm{Id}-\Adj(G))=\mult_{\Adj(G)}(2)
\]
this is precisely a spectral graph theory problem. Cartesian products, graph lifts, equitable partitions, association schemes and signed or weighted covers provide many variants of the present construction. Second, one should study the imaginary root system and root multiplicities of the resulting Kac--Moody algebras. High corank does not merely enlarge the center; it also changes the geometry of the isotropic cone in the root lattice, and this may have consequences for denominator identities, Weyl-group orbits, and automorphic correction problems. Third, it would be useful to determine the diagram automorphism groups of the recursive families rigorously, perhaps by combining graph-automorphism algorithms with an inductive proof. Fourth, one can pass from symmetric to symmetrizable non-symmetric Cartan matrices by integral diagonal rescalings, thereby preserving the rank and corank while producing non-simply-laced analogues.

A further direction concerns generalized, Borcherds and tensor-hierarchy-type extensions. If the condition $A_{ii}=2$ is relaxed, then generalized Kac--Moody algebras with imaginary simple roots enter the picture \cite{Borcherds88}. Such algebras are already natural in string theory, automorphic products and denominator formulae. The present construction remains inside the ordinary generalized Cartan framework, but its abundant isotropic radical directions suggest that high-corank ordinary Cartan matrices could provide singular limits or substructures of more general Borcherds--Kac--Moody systems. This is especially relevant whenever one wants many null or central directions without immediately introducing imaginary simple roots.

The possible relation with gravitational and supergravity symmetry algebras deserves special attention. Infinite-dimensional Kac--Moody algebras occur in the hidden-symmetry approach to gravity and supergravity. The affine algebra $E_9$ appears after reduction of maximal supergravity to two dimensions; the hyperbolic algebra $E_{10}$ appears in the cosmological-billiard and small-tension-expansion approach to eleven-dimensional supergravity; and the very-extended Lorentzian algebra $E_{11}$ has been proposed by West as an organizing algebra for M-theory and its brane charges \cite{Julia85,DamourHenneauxNicolai2002,DamourHenneauxNicolai2003,WestE11,KleinschmidtNicolai2004,KleinschmidtNicolai2005}. These algebras are indefinite, but their Cartan matrices are not high-corank objects in the sense studied here. Their physical relevance comes mainly from their Lorentzian or hyperbolic root geometry, their level decompositions with respect to finite-dimensional regular subalgebras, and their ability to organize towers of supergravity fields and dual fields.

High-corank Kac--Moody algebras are therefore complementary to $E_{10}$, $E_{11}$ and their very-extended relatives. Whereas $E_{10}$ and $E_{11}$ emphasize a large but essentially non-degenerate indefinite root lattice, the present families emphasize a large radical. In a speculative physical reading, replacing a Lorentzian non-degenerate Cartan matrix by a highly singular one could correspond to a degenerate limit in which several independent null directions become simultaneously central. Such a situation might be relevant for multi-parameter tensionless limits, for contractions of duality algebras, for sectors with many mutually compatible brane charges, or for algebraic models in which several Hamiltonian or gauge constraints become central at once. One should not overstate this point: no direct supergravity model is constructed in the present paper. Nevertheless, the comparison indicates a concrete question for future work: can high-corank generalized Cartan matrices arise as contraction limits, degenerations, quotients, or auxiliary symmetry algebras associated with known $E_{10}$/$E_{11}$ level decompositions?

Another possible application concerns compactification and duality. In toroidal compactifications, hidden symmetry groups grow as the dimension is lowered, and their affine or hyperbolic extensions encode increasingly non-local degrees of freedom. A high-corank algebra could provide an algebraic model for situations where more than one independent affine-like or null direction is present. This may be relevant to reductions with several distinguished circles, to doubled or exceptional-geometric descriptions with multiple section constraints, or to degenerations of moduli spaces in which several cycles collapse simultaneously. The corank would then count, not a spacetime dimension directly, but the number of independent radical directions in the algebraic symmetry data. This viewpoint could be tested by comparing the radicals of the Cartan matrices with charge lattices, central extensions, or constraint algebras in concrete compactification models.

From the representation-theoretic side, an important problem is to understand integrable highest-weight modules for the algebras constructed here. The presence of several central elements means that highest weights carry several independent levels. In affine theory, the level is a fundamental invariant of a representation; in the high-corank case, the level becomes a vector in the dual of the center. This raises natural questions about unitarity, characters, denominator formulae, modularity, and possible multi-variable generalizations of affine characters. Even for the explicit corank-two example studied above, the structure of imaginary roots and their multiplicities should already contain information not present in ordinary affine theory.

Finally, the graph-theoretic formulation suggests computational projects. One can enumerate connected multigraphs up to a given size whose Cartan matrix $2\mathrm{Id}-\Adj(G)$ has prescribed corank, search for extremal examples maximizing corank under degree or regularity constraints, study random graph models conditioned to have adjacency eigenvalue $2$, and compare the resulting Kac--Moody algebras with the recursive hypercube families. This would produce a bridge between computational spectral graph theory and the construction of new indefinite Kac--Moody algebras.\\

In summary, the present work provides explicit high-corank examples and a general spectral mechanism for producing many more. The most immediate mathematical tasks are to refine the classification of such constructions, to compute the associated root multiplicities and automorphism groups, and to understand their representation theory. The most speculative but potentially fruitful physical task is to determine whether the corank can acquire an invariant meaning in models with several independent central, null, constrained or tensionless sectors, in analogy with the way the unique affine null direction has a clear meaning in loop-algebra and two-dimensional integrable contexts.\\

\section*{Acknowledgments}
SB and VS are supported by the Interdisciplinary Thematic Institute QMat, as part of the ITI 2021-2028 program of the University of Strasbourg, CNRS and Inserm, and was supported by IdEx Unistra (ANR-10-IDEX-0002), SFRI STRAT' US project (ANR-20-SFRI-0012), and EUR QMAT (ANR-17-EURE-0024). Both these fundings were granted under the framework of the French Investments for the Future Program. QB is supported by an IdEx funding ``Attractivity 2025'' of the University of Strasbourg. AM's work is supported by COST Action CaLISTA CA21109 funded by COST (European Cooperation in Science and Technology).

\newpage

\appendix

\section{The Lie algebra associated to $A_3$}\label{app:A4}

As we have seen, the matrix $A_3$ has corank two. The vectors
\beqa
v_3&=&\Big(\begin{array}{cccccccccccc}
1 & 0&-1&-1& 0& 1& 1& 0& -1& -1& 0&1
\end{array}\Big)^t \nn\\
w_3&=&\Big(\begin{array}{cccccccccccc}
0 &1& 1& 0& -1& -1& 0& 1& 1& 0& -1& -1
\end{array}\Big)^t \nn
\eeqa
generate a basis of its kernel.\\

Let $h_i$ be the generators of the Cartan subalgebra.
The two elements
\begin{eqnarray*}
k_{1} &=&v_3^i h_i= h_{1}-h_{3}-h_{4}+h_{6}+h_{7}-h_{9}-h_{10}+h_{12}\ \\
k_{2} &=&w_3^ih_i=h_{2}+h_3-h_5-h_{6}+h_{8}+h_{9}-h_{11}-h_{12}\
\end{eqnarray*}%
\bigskip
are central.

Let $\alpha_i$ be the simple roots. Since the matrix $A_3$ is symmetric, we can define the scalar product on the root-space
\begin{equation*}
(\alpha _{i},\alpha _{j})=(A_{3})_{ij}
\end{equation*}%
It follows that $A_3$ admits two zero-norm vectors:
\beqa
\label{eq:del}
\delta _{1} &=&\alpha_{1}-\alpha_{3}-\alpha_{4}+\alpha_{6}+\alpha_{7}-\alpha_{9}-\alpha_{10}+\alpha_{12}\nn\\
 \delta _{2} &=&\alpha_{2}+\alpha_3-\alpha_5-\alpha_{6}+\alpha_{8}+\alpha_{9}-\alpha_{11}-\alpha_{12}\nn
\eeqa%
Such two vectors span a $2-$dimensional isotropic subspace in $12$ dimensions and we have for any
roots $\alpha: (\alpha,\delta_i)=0$, $i=1,2$.\\

Let $(e_i,f_i,h_i), i=1,\cdots,12$ be the $\mathfrak{sl}(2,\mathbb C)$ triplets associated to the simple roots. Since the corank of $A_{3}$ is two, one must add two derivations (see Definition \ref{theo:Weyl}). Extend $\mathfrak{h}'=\{h_1,\cdots, h_{12}\}$ to $\mathfrak{h}$ where $\mathfrak{h}=\{h_1,\cdots, h_{12},d_1,d_2\}$. We now compute $[d_a,e_i]$ and $[d_a,f_i]$.
To proceed we write
\beqa
A=\begin{pmatrix}A'&B\\C&D\end{pmatrix}\nn
\eeqa
with
\beqa
\begin{array}{cccccc}
A'&=&\left(\begin{array}{rrrrrrrrrr}
2 & -1 & 0 & 0 & 0 & -1 & -1 & 0 & 0 & 0
\\
 -1 & 2 & -1 & 0 & 0 & 0 & 0 & -1 & 0 & 0
\\
 0 & -1 & 2 & -1 & 0 & 0 & 0 & 0 & -1 & 0
\\
 0 & 0 & -1 & 2 & -1 & 0 & 0 & 0 & 0 & -1
\\
 0 & 0 & 0 & -1 & 2 & -1 & 0 & 0 & 0 & 0
\\
 -1 & 0 & 0 & 0 & -1 & 2 & 0 & 0 & 0 & 0
\\
 -1 & 0 & 0 & 0 & 0 & 0 & 2 & -1 & 0 & 0
\\
 0 & -1 & 0 & 0 & 0 & 0 & -1 & 2 & -1 & 0
\\
 0 & 0 & -1 & 0 & 0 & 0 & 0 & -1 & 2 & -1
\\
 0 & 0 & 0 & -1 & 0 & 0 & 0 & 0 & -1 & 2
\end{array}\right) \ , &
B&=&\left(\begin{array}{rr}
0 & 0
\\
 0 & 0
\\
 0 & 0
\\
 0 & 0
\\
 -1 & 0
\\
 0 & -1
\\
 0 & -1
\\
 0 & 0
\\
 0 & 0
\\
 -1 & 0
\end{array}\right)\nn\\ \\
C&=&\left(\begin{array}{rrrrrrrrrr}
0 & 0 & 0 & 0 & -1 & 0 & 0 & 0 & 0 & -1
\\
 0 & 0 & 0 & 0 & 0 & -1 & -1 & 0 & 0 & 0
\end{array}\right)\ , &
D&=&\left(\begin{array}{cc}
2 & -1
\\
 -1 & 2
\end{array}\right)
\end{array}\nn
\eeqa
Note that $B=C^t$. The matrix $A'$ is non-singular ($\det A'=-80$).
We now associate to $A$ a minimal realisation which is the $14\times14$
matrix \cite{Kac2, Moody} (see also the pedagogical references \cite{Ca, Mar}):
\beqa
\mathcal{C}=\begin{pmatrix} A'&B&{\bf 0}\\ C&D&\mathrm{Id}\\{\bf 0}&\mathrm{Id}&{\bf 0}
\end{pmatrix}\nn
\eeqa
with the notations of above. We have
\beqa
\big[d_1,e_i\big]= \delta_{i,11} e_i \ , \qquad
\big[d_2,e_i\big]=  \delta_{i,12} e_i \ ,\qquad \big[d_1,f_i\big]= - \delta_{i,11} f_i \ , \qquad
\big[d_2,f_i\big]= - \delta_{i,12} f_i \nn
\eeqa
All other realisations obtained by permuting rows and columns of $A_3$ lead to non-canonically isomorphic algebras.\\

The particular choice above singles out the last two simple-root labels when adjoining the two derivations. A different invertible $10\times10$ principal submatrix, or a different ordering of the simple roots, gives an isomorphic minimal realization but changes the displayed coordinates of the derivations and fundamental weights. The intrinsic data are the Cartan matrix, the two-dimensional center spanned by $k_1,k_2$, and the two-dimensional radical in the root lattice spanned by $\delta_1,\delta_2$.

Since the Cartan matrix $A_3$ is of corank two, the previous simple roots $\alpha_1,\cdots,\alpha_{12}$ are extended to $\alpha'_1=\alpha_1,\cdots,\alpha'_{12}=\alpha_{12}, \alpha'_{13}=\beta_1, \alpha'_{14} = \beta_{2}$ with two derivations so that $\alpha'_j(h'_i)=\mathcal{C}_{ij}$. Furthermore, the extended root space is endowed with the scalar product defined by
 \beqa
(\alpha'_i,\alpha'_j)= \mathcal{C}_{ij}  \nn
\eeqa
(see {\it e.g.} \cite{Ca, Mar}).
The fundamental weights $\mu'^1,\cdots,\mu'^{14}$ are as usual defined by
$(\mu'^i,\alpha'_j)=\delta^i{}_j$ and given by
\beqa
\mu'^i = (\mathcal{C}^{-1})^{ij}\alpha'_j \nn
\eeqa
We obtain the following weights:
{\small
\beqa
&&\mu'^1=\mu^1=
\frac{11}{10} \alpha_{1}+
\frac 1{5}\alpha_{2}-
\frac 7{10} \alpha_{3}-
\frac{17}{20} \alpha_{4}-
\frac 1{5}\alpha_{5}+
\frac{9}{20} \alpha_{6}+
\frac{11}{20} \alpha_{7}-
\frac{3}4 \alpha_{9}-
\frac{4}5 \alpha_{10}-\beta_{1}+\beta_{2}\nn\\
&&\mu'^2=\mu^2=
\frac 1 5\alpha_{1}+\frac{2} 5 \alpha_{2}-\frac 2 5  \alpha_{3}
-\frac 7{10} \alpha_{4}-\frac 2 5 \alpha_{5}\
-\frac 1{10}\alpha_{6}+\frac 1{10}\alpha_{7}-\frac 12 \alpha_{9}
-\frac 3 5 \alpha_{10}-\beta_{1}\nn\\
&&\mu'^3=\mu^3=
-\frac 7{10} \alpha_{1}-\frac 25 \alpha_{2}
+\frac 25 \alpha_{3}+\frac 1 5 \alpha_{4}
-\frac 1 {10} \alpha_{5}-\frac 2 5  \alpha_{6}-\frac 3 5  \alpha_{7}
-\frac 1 2 \alpha_{8}+\frac 1 {10} \alpha_{10}-\beta_{2}\nn\\
&&\mu'^4=\mu^4=
-\frac {17}{20} \alpha_{1} -\frac 7{10} \alpha_{2}
+\frac 1 5 \alpha_{3}+\frac{11}{10} \alpha_{4}
+\frac 9{20} \alpha_{5}-\frac 1 5 \alpha_{6}
-\frac 4 5  \alpha_{7}-\frac 3 4 \alpha_{8}+\frac{11}{20} \alpha_{10}
+\beta_{1}-\beta_{2}\nn \\
&&\mu'^5=\mu^5=
-\frac 1 5\alpha_{1}-\frac 2 5  \alpha_{2}-\frac 1 {10} \alpha_{3}
+\frac 9 {20}  \alpha_{4}+\frac 9{10} \alpha_{5}
+\frac 7 {20} \alpha_{6}-\frac 7{20}  \alpha_{7}-\frac 1 2 \alpha_{8}
-\frac 1 4 \alpha_{9} +\frac 1 {10} \alpha_{10}+\beta_{1}\nn\\
&&\mu'^6=\mu^6=
\frac 9{20} \alpha_{1}-\frac 1 {10}\alpha_{2}-\frac 2 5  \alpha_{3}-
\frac 1 5 \alpha_{4}+\frac 7{20} \alpha_{5}+\frac 9{10} \alpha_{6}
+\frac 1 {10}\alpha_{7}-\frac 1 4 \alpha_{8}-\frac 1 2 \alpha_{9}-
\frac 7{20} \alpha_{10}+\beta_{2}\nn\\
&&\mu'^7=\mu^7=
\frac{11}{20} \alpha_{1}+\frac 1 {10} \alpha_{2}-\frac 3 5  \alpha_{3}-
\frac 4 5  \alpha_{4}-\frac 7 {20} \alpha_{5}+\frac 1 {10}\alpha_{6}+
\frac 9{10} \alpha_{7}+\frac 1 4 \alpha_{8}-\frac 1 2 \alpha_{9}-
\frac{13}{20} \alpha_{10}-\beta_{1}+\beta_{2}\nn\\
&&\mu'^8=\mu^8=
-\frac 1 2\alpha_{3}-\frac 3 4 \alpha_{4}-\frac 1 2 \alpha_{5}-
\frac1 4 \alpha_{6}+\frac 1 4 \alpha_{7}+\frac 1 2 \alpha_{8}-
\frac 1 4 \alpha_{9}-\frac 1 2 \alpha_{10}-\beta_{1}\nn\\
&&\mu'^9=\mu^9=
-\frac 3 4 \alpha_{1}-\frac 1 2 \alpha_{2}-\frac 1 4 \alpha_{5}-
\frac 1 2 \alpha_{6}-\frac1 2 \alpha_{7}-\frac 1 4 \alpha_{8}+
\frac1 2 \alpha_{9}+\frac 1 4 \alpha_{10}-\beta_{2}\nn\\
&&\mu'^{10}=\mu^{10}=
-\frac 4 5 \alpha_{1}-\frac 3 5  \alpha_{2}+\frac 1 {10}\alpha_{3}+
\frac{11}{20} \alpha_{4}+\frac 1{10} \alpha_{5}-\frac7 {20} \alpha_{6}-
\frac{13}{20} \alpha_{7}-\frac 1 2 \alpha_{8}+\frac 1 4  \alpha_{9}+
\frac9 {10} \alpha_{10}+\beta_{1}-\beta_{2}
\nn\\
&&\mu'^{11}=\mu^{11}=\beta_1\nn\\
&&\mu'^{12}=\mu^{12}=\beta_2\nn\\
&&\mu'^{13}=-\delta_1-\delta_2=
-\alpha_{1}-\alpha_{2}+\alpha_{4}+\alpha_{5}-\alpha_{7}-\alpha_{8}+\alpha_{10}+\alpha_{11}\nn\\
&&\mu'^{14}=\delta_1=
\alpha_{1}-\alpha_{3}-\alpha_{4}+\alpha_{6}+\alpha_{7}-\alpha_{9}-\alpha_{10}+\alpha_{12}\nn
\eeqa
We have the following symmetries of the simple roots/fundamental weights:
\beqa\label{def:sym}
&\Big[\alpha_1 \leftrightarrow \alpha_4\ , \ \
\alpha_2 \leftrightarrow \alpha_3\ , \ \
\alpha_5 \leftrightarrow \alpha_6\ , \ \
\alpha_{1+6} \leftrightarrow \alpha_{4+6}\ , \ \
\alpha_{2+6} \leftrightarrow \alpha_{3+6}\ , \ \
\alpha_{5+6} \leftrightarrow \alpha_{6+6}\ , \ \
\alpha'_{13} \leftrightarrow \alpha'_{14}\Big] \nn\\
&\&\\
&\Big[\mu^1 \leftrightarrow \mu^4\ , \ \
\mu^2 \leftrightarrow \mu^3\ , \ \
\mu^6 \leftrightarrow \mu^7\ , \ \
\mu^{1+6} \leftrightarrow \mu^{4+6}\ , \ \
\mu^{2+6} \leftrightarrow \mu^{3+6}\ , \ \
\mu^{6+6} \leftrightarrow \mu^{7+6}\ , \ \
\mu'^{13} \leftrightarrow \mu'^{14}\Big]\nn
\eeqa}

\section{Diagram automorphisms of the constructed $A_n^{(L)}$ matrices}\label{app:OutAut}

Looking at the kernel of $A_3$ derived in Appendix~\ref{app:A4}, one observes that the palindromes of $v_3$ and $w_3$ ({\it i.e.} those vectors read from right to left) still are in the kernel. This property comes from the fact that the matrix $A_3$ is the same if you rotate it by $180^{\circ}$ about its center ($A^{\rm rot}_{ij}=A_{n+1-i,n+1-j}$ where $n$ is the size of the matrix). This rotational invariance property is conserved by the constructions we are studying: if $A$ is rotation-invariant, then
\beqa
A^{(L)}_{n+1} =\begin{pmatrix} A^{(L)}_{n}&-L \ \mathrm{Id}\\-L \ \mathrm{Id}&A^{(L)}_{n} \end{pmatrix}\ \nn
\eeqa
is also rotation-invariant. From this, we deduce that at every step the kernel of the matrix is invariant by the palindrome action.\\

The property that the Cartan matrix is rotation-invariant translates on the Dynkin diagram by the fact that it is also invariant under rotation of $180^{\circ}$ about its center (see the aztec pyramid in Section~\ref{pyramid}). Strictly speaking, these are diagram automorphisms of the generalized Cartan matrix, not outer automorphisms of the Kac--Moody algebra until one has passed through the standard map from diagram automorphisms to algebra automorphisms. We therefore formulate the following discussion at the level of diagram automorphisms.\\

Consider first the one at step two in the first family of constructions, so the diagram of $\widehat{\mathfrak{a}}_5$. This diagram is equivalent to a hexagon so the group of symmetry is the dihedral group of $12$ elements $D_6$. At the next step (the aztec pyramid), the diagram is made of two copies of $\widehat{\mathfrak{a}}_5$ where similar vertices are linked. Because of this, we cannot mix the two copies: permutation can only act within a single one, or exchange the two copies entirely. In the first option, if one applies an element of the outer automorphism to one of the copies, we need to apply the same element to the second copy. The group of automorphisms associated to this is the diagonal group which we will denote by $\text{Diag}(D_6,D_6)$. For the second option, we denote by $P$ the reflection around the vertical axis. If we compose this reflection by an element of the previous group, we obtain an outer automorphism that exchanges both copies. From this we can conjecture that the group of outer automorphisms of the aztec pyramid is generated by $\text{Diag}(D_6,D_6)$ and $P\circ\text{Diag}(D_6,D_6)$. We will denote it $\langle\text{Diag}(D_6,D_6),P\circ\text{Diag}(D_6,D_6)\rangle$.
\\

The following recursive description should be regarded as a conjectural structural description unless one performs a graph-automorphism computation, or gives an independent proof that no additional automorphisms occur at higher steps. In modern language, the expected structure is a diagonal copy of the previous automorphism group, together with the involution exchanging the two matched copies.

By the same reasoning, if we denote by $\text{Out}_n$ the group of diagram automorphisms at step $n$, we conjecture the inductive relation:
$$\text{Out}_{n+1}=\langle\text{Diag}(\text{Out}_n,\text{Out}_n),P\circ\text{Diag}(\text{Out}_n,\text{Out}_n)\rangle$$

Let us note that the symmetry we have made explicit in \eqref{def:sym} corresponds to the element of the automorphism group $\text{Out}_{3}$ of the form $\text{Diag}(\sigma,\sigma)$ where $\sigma$ is the outer automorphism of $\widehat{\mathfrak{a}}_5$ consisting in the composition of the reflection with respect to the $(1,4)$-axis and the $180^{\circ}$ rotation.

\newpage

\bibliographystyle{utphys}
\bibliography{ref_final}
\end{document}